\documentclass[11pt]{article}

\usepackage[T1]{fontenc}
\usepackage[utf8]{inputenc}
\usepackage{lmodern}
\usepackage{amsmath,amssymb,amsthm,mathtools}
\usepackage{booktabs,array,longtable}
\usepackage{geometry}
\usepackage{xcolor}
\usepackage{listings}
\usepackage{enumitem}
\usepackage{hyperref}
\usepackage{cleveref}
\usepackage{microtype}

\hypersetup{
  colorlinks=true,
  linkcolor=blue!60!black,
  citecolor=blue!60!black,
  urlcolor=blue!60!black,
  pdftitle={Singular Cholesky Fibers over Finite Fields},
  pdfauthor={Hongfeng Wu and Kai Zhou},
  pdfsubject={},
  pdfkeywords={}
}
\allowdisplaybreaks

\newtheorem{theorem}{Theorem}[section]
\newtheorem{proposition}[theorem]{Proposition}
\newtheorem{lemma}[theorem]{Lemma}
\newtheorem{corollary}[theorem]{Corollary}
\theoremstyle{definition}
\newtheorem{definition}[theorem]{Definition}
\newtheorem{example}[theorem]{Example}

\theoremstyle{remark}
\newtheorem{remark}[theorem]{Remark}

\newcommand{\F}{\mathbb F}
\newcommand{\T}{\mathcal T}
\newcommand{\rank}{\operatorname{rank}}
\newcommand{\Null}{\operatorname{Null}}
\newcommand{\Col}{\operatorname{Col}}

\newcommand{\ind}{\mathbf 1}
\newcommand{\diag}{\operatorname{diag}}
\newcommand{\Span}{\operatorname{span}}
\newcommand{\Sym}{\operatorname{Sym}}

\lstdefinestyle{pythonstyle}{
  language=Python,
  basicstyle=\ttfamily\small,
  keywordstyle=\color{blue!65!black},
  stringstyle=\color{green!45!black},
  commentstyle=\color{gray!80!black},
  numbers=left,
  numberstyle=\scriptsize\color{gray},
  stepnumber=1,
  numbersep=8pt,
  showstringspaces=false,
  breaklines=true,
  frame=single,
  columns=fullflexible,
  keepspaces=true
}

\title{Singular Cholesky Fibers over Finite Fields}
\author{Hongfeng Wu$^{1}$ and Kai Zhou$^{2}$\\
\footnote{E-Mail addresses:
whfmath@gmail.com (H. Wu), kzhou@cslg.edu.cn (K. Zhou)}
{\small $^{1}$College of Science, North China University of Technology, Beijing, China}\\
{\small $^{2}$School of Mathematics and Statistics,}\\
{\small Suzhou University of Technology, Changshu, China}}
\date{}

\begin{document}

\maketitle

\begin{abstract}
For a finite field $\F_q$, consider the triangular Cholesky map
$\Gamma_{n,q}(U)=U^TU$ from upper triangular matrices to symmetric
matrices.  Generalized Cholesky theory describes the regular locus on
which all leading principal minors are nonzero, but it does not determine
the multiplicities or root ranks in a fiber over a singular target.  We
develop a fixed-target fiber theory for this singular boundary and answer
three questions posed by Cooper and Whitlatch.

Our principal results concern the zero fiber.  Over $\F_2$ we construct an
explicit, invertible, rank-preserving recursive bijection between
square-zero upper triangular matrices and upper triangular matrices
satisfying $U^TU=0$.  Over every finite field we determine the entire rank
distribution of this zero fiber: in even characteristic the square-zero
recurrence, and hence the rank-refined equinumerosity, persists over every
$\F_{2^e}$, whereas in odd characteristic its failure is measured by an
explicit quadratic-character correction.  We place these results in a
uniform framework by proving an exact first-pivot recursion for the fiber
cardinality over an arbitrary symmetric target.  Its regular
specialization gives the constant fiber sizes on leading-principal-minor
cones, while its zero-pivot branch explains the new singular behavior.
For binary diagonal targets we further compress the rank-refined count to
$O(n^2)$ integer-arithmetic transitions and show that it depends on the
order, not merely the number, of the diagonal entries.  Executable
implementations and exhaustive low-order checks are collected in an
appendix.
\end{abstract}
\medskip
\noindent\textbf{Keywords.}
Cholesky fiber; upper triangular matrix; finite field; quadratic form; recursive enumeration; explicit bijection.

\medskip
\noindent\textbf{2020 Mathematics Subject Classification.}
15A23, 15B33, 11E08, 05A15, 15A03.
\section{Introduction}
For a finite field $\F_q$, let $\T_n(\F_q)$ be the set of all
$n\times n$ upper triangular matrices and let $\Sym_n(\F_q)$ be the
space of symmetric $n\times n$ matrices.  The map
\[
 \Gamma_{n,q}:\T_n(\F_q)\longrightarrow\Sym_n(\F_q),
 \qquad \Gamma_{n,q}(U)=U^TU,
\]
will be called the triangular Cholesky map.  Summing the cardinalities
of all its fibers merely recovers $|\T_n(\F_q)|$.  The nontrivial
enumerative problem is to determine the individual fiber
$\Gamma_{n,q}^{-1}(A)$ for a prescribed symmetric target $A$ and, when
the fiber is nonempty, to refine its cardinality by the rank of the
root.

The regular part of this problem is governed by triangular elimination.
Cooper, Hanna, and Whitlatch developed a finite-field analogue of
positive definiteness in terms of nonzero square leading principal
minors \cite{CooperHannaWhitlatch2024}.  Khare and Vishwakarma introduced
the larger leading-principal-minor (LPM) cones and their generalized
Cholesky geometry over real and complex fields
\cite{KhareVishwakarma2025}.  Vishwakarma subsequently developed the
finite-field theory \cite{Vishwakarma2027}, considering symmetric
matrices whose leading principal minors are all nonzero and have a
prescribed quadratic-character pattern.  His generalized factorization
$A=LA_{\epsilon}L^T$, with $L$ invertible lower triangular and
$A_{\epsilon}$ a fixed representative, parametrizes these regular
cones over definite fields and in characteristic two; the cardinalities
of the cones are determined over all finite fields.  That work also
develops Frobenius-compatible maps and transported group structures.
In the special case $A_{\epsilon}=I_n$, transposition identifies this
construction with the restriction of $\Gamma_{n,q}$ to invertible upper
triangular matrices.  Very recently, Ayyer and Prasad compared several
inequivalent notions of positive definiteness and positive
semidefiniteness over finite fields \cite{AyyerPrasad2026}.  Their type-2
positive-semidefinite class consists of targets $A=LL^T$ with $L$ lower
triangular and with nonnegative diagonal entries, and they obtain general
upper bounds for the number of such targets.

These target-space results and the present fixed-target problem are
complementary but logically distinct.  Enumerating or parametrizing the
image of a triangular factorization does not determine the cardinality of
$\Gamma_{n,q}^{-1}(A)$ for a prescribed $A$, still less the distribution
of the ranks of its roots.  This distinction is decisive on the singular
boundary.  If a leading pivot vanishes, triangular elimination is no
longer deterministic: the fiber may be empty, or it may branch over
rank-one perturbations, and different branches may contain roots of
different ranks.  The zero matrix is the most singular target.  Our
first-pivot recursion makes the interface precise: it recovers the
expected constant fiber sizes on the regular LPM locus and then continues
through every zero-pivot branch.

The square-zero family on the other side of our main bijection has a
classical enumerative history; see Kirillov \cite{Kirillov1995} and the
explicit formula of Ekhad and Zeilberger \cite{EkhadZeilberger1996}.
Our immediate starting point is the rank-refined work of Cooper and
Whitlatch \cite{CooperWhitlatch2025}.  Over $\F_2$ they considered
\[
\begin{aligned}
B_n(r)&=\{X\in\T_n(\F_2):\rank X=r,\ X^2=0\},\\
C_n(r)&=\{U\in\T_n(\F_2):\rank U=r,\ U^TU=0\},
\end{aligned}
\]
and proved the rank-refined equinumerosity
\[
 |B_n(r)|=|C_n(r)|
\]
by showing that both sides satisfy the recurrence
\[
 f_{n,0}=1,
 \qquad
 f_{n,r}=2^r f_{n-1,r}
 +\bigl(2^{n-r}-2^{r-1}\bigr)f_{n-1,r-1}
\]
for $1\le r\le\lfloor n/2\rfloor$, with $f_{n,r}=0$ outside the
feasible rank range.  Here $f_{n,r}$ denotes the common value of
$|B_n(r)|$ and $|C_n(r)|$. This work is related to successful
pressing sequences for bicolored graphs and binary matrix elimination
\cite{CooperDavis2016}.  Cooper and Whitlatch posed three further
questions: determine the Cholesky roots of an arbitrary fixed binary
matrix; construct an explicit rank-preserving bijection between
$B_n(r)$ and $C_n(r)$; and extend the zero-target enumeration to other
finite fields.

Our two principal results answer the latter two questions.  First,
\Cref{thm:explicit-bijection} constructs, recursively and without any
cardinality-matching argument, an explicit invertible map
\[
 \Phi_n:B_n(r)\longrightarrow C_n(r)
\]
that preserves rank for every $n$ and $r$.  Second,
\Cref{thm:even-recurrence,thm:odd-recurrence} determine the rank
distribution of the zero fiber over every finite field.  The
rank-refined square-zero recurrence persists over every $\F_{2^e}$.  If
$q$ is odd, and $\chi$ denotes the quadratic character, the extension
coefficient acquires the explicit correction
\[
 \kappa_n(q)=
 \begin{cases}
 (q-1)\chi((-1)^{n/2})q^{(n-2)/2},&n\text{ even},\\
 0,&n\text{ odd},
 \end{cases}
\]
which arises from the discriminant of a nondegenerate quotient quadratic
space and explains why the equinumerosity already fails in order two.

These central theorems are supported by a broader fixed-target framework.
For every finite field $\F_q$ and every symmetric target $A$,
\Cref{thm:general-recursion} gives an exact first-pivot recursion for
$|\Gamma_{n,q}^{-1}(A)|$.  Its regular specialization
\Cref{cor:regular-fibers} shows that, over an odd field, the standard
Cholesky map has $2^n$ roots on the all-square LPM cone and no roots on
the other regular sign cones, whereas over an even field it has one root
at every regular target.  Finally, an upper triangular target in the
image must be diagonal, and \Cref{thm:diagonal-recurrence} gives a
rank-refined recursion for every binary diagonal target using only
$O(n^2)$ integer-arithmetic transitions.  The count depends on the order
of the diagonal entries, not merely on their number.

The paper is organized as follows.  \Cref{sec:preliminaries} fixes the
notation and records triangular-congruence invariance.
\Cref{sec:general-recursion} proves the arbitrary-field first-pivot
recursion and identifies its regular locus.  \Cref{sec:diagonal}
develops the binary diagonal state recursion.  \Cref{sec:bijection}
constructs the explicit rank-preserving bijection, and
\Cref{sec:zero-fibers} treats zero fibers over arbitrary finite fields.
Low-order verification is recorded in \Cref{sec:verification}; all
implementations are placed in \Cref{app:algorithms}.

\section{Preliminaries}
\label{sec:preliminaries}

\begin{definition}
Let $A\in M_n(\F_q)$. If there exists an upper triangular matrix
$U\in\T_n(\F_q)$ such that
\[
U^TU=A,
\]
then $U$ is called an upper triangular Cholesky root of $A$, and the
equality $A=U^TU$ is called a Cholesky decomposition.
\end{definition}

\begin{remark}
Over any finite field, $U^TU$ is symmetric. Thus a non-symmetric matrix
has no Cholesky roots in this sense.
\end{remark}

\begin{definition}
For $A\in\Sym_n(\F_q)$, define
\[
 \nu_q(A)=\#\{U\in\T_n(\F_q):U^TU=A\}.
\]
For binary targets we abbreviate $\nu_2(A)$ to $\nu(A)$.  For every
finite field we use the empty-matrix convention
\[
 \nu_q(\varnothing)=1.
\]
\end{definition}

For $a\in\F_q$, put
\[
 \rho_q(a)=\#\{u\in\F_q:u^2=a\}.
\]
The value $\rho_q(0)$ is always $1$.  If $q$ is even, the Frobenius
map $u\mapsto u^2$ is an automorphism of $\F_q$, so $\rho_q(a)=1$ for
every $a$.  If $q$ is odd and $\chi$ denotes the quadratic character
of $\F_q$, extended by $\chi(0)=0$, then
\[
 \rho_q(a)=1+\chi(a)\qquad(a\in\F_q).
\]
Indeed, a nonzero square has exactly the two roots $u$ and $-u$, while
a nonsquare has none.
\begin{proposition}
\label{prop:congruence}
If $P\in\T_n(\F_q)$ is invertible, then
\[
\nu_q(P^TAP)=\nu_q(A)
\]
for every $A\in\Sym_n(\F_q)$.
\end{proposition}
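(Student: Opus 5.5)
The plan is to exhibit an explicit bijection between the two fibers, using right multiplication by $P$. Define
\[
 R_P:\T_n(\F_q)\longrightarrow\T_n(\F_q),\qquad R_P(U)=UP .
\]
The argument then proceeds in three short steps.

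First, $R_P$ maps $\T_n(\F_q)$ into itself, because a product of upper triangular matrices is upper triangular. Second, $R_P$ is a bijection of $\T_n(\F_q)$. The reason is that the inverse of an invertible upper triangular matrix is again upper triangular; one sees this from back substitution, or from the Cayley--Hamilton theorem, which writes $P^{-1}$ as a polynomial in $P$. Hence $R_{P^{-1}}$ also preserves $\T_n(\F_q)$, and it is a two-sided inverse of $R_P$.

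Third, $R_P$ carries the fiber over $A$ onto the fiber over $P^TAP$. The key identity is
\[
 (UP)^T(UP)=P^T(U^TU)P .
\]
So if $U^TU=A$, then $R_P(U)$ is a root of $P^TAP$. Conversely, suppose $V\in\T_n(\F_q)$ satisfies $V^TV=P^TAP$. Put $U=VP^{-1}$, which lies in $\T_n(\F_q)$ by the second step. Then
\[
 U^TU=(P^{-1})^TP^TAPP^{-1}=A .
\]
Therefore $R_P$ restricts to a bijection $\Gamma_{n,q}^{-1}(A)\to\Gamma_{n,q}^{-1}(P^TAP)$, and this gives $\nu_q(P^TAP)=\nu_q(A)$.

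There is no real obstacle here. The only point that needs to be stated rather than assumed is that $\T_n(\F_q)$ is closed under inverses of invertible elements, which is what makes $R_P$ surjective onto the target fiber.

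It is also worth noting that $R_P$ preserves rank, because $P$ is invertible. So the same argument gives the rank-refined equality of fibers, which the paper will likely use later.
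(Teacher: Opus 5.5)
Your proof is correct and matches the paper's own argument: the paper also uses the bijection $U\mapsto UP$ with inverse $V\mapsto VP^{-1}$, which relies on $P^{-1}$ being upper triangular. Your justification of that fact (back substitution or Cayley--Hamilton) and your remark that the bijection preserves rank are both valid additions.
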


\begin{proof}
Let
\[
\mathcal S_A=\{U\in\T_n(\F_q):U^TU=A\}.
\]
Define the map
\[
\Phi:\mathcal S_A\to\mathcal S_{P^TAP},\qquad U\mapsto UP.
\]
We first verify that $\Phi$ is well defined. Since $U$ and $P$ are both upper triangular, their product $UP$ is upper triangular. Moreover,
\[
(UP)^T(UP)=P^T U^T U P=P^T A P,
\]
so indeed $UP\in\mathcal S_{P^TAP}$.

To show that $\Phi$ is a bijection, define
\[
\Psi:\mathcal S_{P^TAP}\to\mathcal S_A,\qquad V\mapsto VP^{-1}.
\]
Because $P$ is invertible and upper triangular, its inverse $P^{-1}$ is also upper triangular; hence $VP^{-1}$ is upper triangular for every upper triangular $V$. Furthermore,
\[
(VP^{-1})^T(VP^{-1})
=(P^{-1})^T V^T V P^{-1}
=(P^{-1})^T P^T A P P^{-1}
=(P P^{-1})^T A (P P^{-1})
=A.
\]
Thus $\Psi(V)\in\mathcal S_A$, so $\Psi$ is well defined.

Finally, for $U\in\mathcal S_A$ and $V\in\mathcal S_{P^TAP}$,
\[
\Psi(\Phi(U))=(UP)P^{-1}=U,
\qquad
\Phi(\Psi(V))=(VP^{-1})P=V.
\]
Therefore $\Phi$ is a bijection between $\mathcal S_A$ and $\mathcal S_{P^TAP}$, which implies
\[
\nu_q(P^TAP)=|\mathcal S_{P^TAP}|=|\mathcal S_A|=\nu_q(A).
\]
\end{proof}
\begin{remark}
\Cref{prop:congruence} shows that the Cholesky fiber cardinality is an
invariant of the congruence action of the invertible upper triangular
group.  Equivalently, after transposition this is the lower triangular
congruence action used to parametrize the regular
leading-principal-minor cones in \cite{Vishwakarma2027}.  On the
singular boundary the action still preserves fiber sizes, although it
need not be transitive.
\end{remark}

\section{A Complete First-Pivot Recursion over Finite Fields}
\label{sec:general-recursion}

\subsection{First-Row Block Decomposition}

Write
\[
A=
\begin{pmatrix}
a&b^T\\
b&D
\end{pmatrix},
\qquad
a\in\F_q,\quad b\in\F_q^{n-1},\quad
D\in\Sym_{n-1}(\F_q).
\]
Every $U\in\T_n(\F_q)$ can be written uniquely as
\[
U=
\begin{pmatrix}
u&x^T\\
0&V
\end{pmatrix},
\qquad
u\in\F_q,\quad x\in\F_q^{n-1},\quad
V\in\T_{n-1}(\F_q).
\]
A direct computation gives
\[
U^TU
=
\begin{pmatrix}
u^2&ux^T\\
ux&xx^T+V^TV
\end{pmatrix}.
\]
The block identities above give an arbitrary-field recursion.  The
coefficient $\rho_q(a)$ records exactly the possible choices of the
first diagonal entry of the root.
\begin{theorem}
\label{thm:general-recursion}
With the notation above,
\[
\boxed{
\nu_q(A)=
\begin{cases}
\rho_q(a)\,\nu_q(D-a^{-1}bb^T),&a\ne0,\\[2mm]
0,&a=0,\ b\ne0,\\[2mm]
\displaystyle\sum_{x\in\F_q^{n-1}}\nu_q(D-xx^T),
&a=0,\ b=0.
\end{cases}}
\]
\end{theorem}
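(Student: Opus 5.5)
The plan is to read the recursion directly off the block identity for $U^TU$. A matrix $U=\begin{pmatrix}u&x^T\\0&V\end{pmatrix}$ is a root of $A$ exactly when three conditions hold:
\[
u^2=a,\qquad ux=b,\qquad V^TV=D-xx^T.
\]
So $\nu_q(A)$ counts the triples $(u,x,V)\in\F_q\times\F_q^{n-1}\times\T_{n-1}(\F_q)$ satisfying these equations. We count them by first fixing $u$, then $x$, and then summing the number of admissible $V$. For fixed $(u,x)$, that number is by definition $\nu_q(D-xx^T)$; when $n=1$ it is $1$ by the empty-matrix convention. We then split according to the two cases $a\neq0$ and $a=0$.

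\emph{Case $a\ne0$.} The first equation allows exactly $\rho_q(a)$ values of $u$, and each is nonzero. For each such $u$ the second equation forces $x=u^{-1}b$. Then
\[
xx^T=u^{-2}bb^T=a^{-1}bb^T,
\]
which does not depend on the chosen square root $u$. Each admissible $u$ therefore contributes exactly $\nu_q(D-a^{-1}bb^T)$ roots, and summing over $u$ gives $\rho_q(a)\,\nu_q(D-a^{-1}bb^T)$. This is the key point of the case: all square roots of $a$ lead to the same Schur-complement target.

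\emph{Case $a=0$.} Since $\F_q$ is a field, $u^2=0$ forces $u=0$, and then $ux=0$. If $b\ne0$, the equation $ux=b$ has no solution, so the fiber is empty. If $b=0$, the second equation holds automatically and $x$ ranges freely over $\F_q^{n-1}$. The count is then $\sum_x\nu_q(D-xx^T)$.

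No step here is a genuine obstacle. The only points needing care are the bijection between roots and triples (which uses the uniqueness of the block decomposition) and the observation that $a^{-1}bb^T$ is independent of the choice of $u$. Both are immediate.
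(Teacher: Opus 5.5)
Your proposal is correct and follows essentially the same route as the paper: you equate the blocks of $U^TU$ to get $u^2=a$, $ux=b$, $V^TV=D-xx^T$, then treat $a\ne0$ (where $x=u^{-1}b$ is forced and $xx^T=a^{-1}bb^T$ does not depend on the chosen root $u$), $a=0,\ b\ne0$ (empty fiber), and $a=0,\ b=0$ (free $x$, summed). Your remark on the $n=1$ empty-matrix convention is a small detail that the paper leaves implicit.
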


\begin{proof}
Equating the three blocks of $U^TU$ with the corresponding blocks of
$A$ gives
\begin{equation}
 u^2=a,\qquad ux=b,\qquad V^TV=D-xx^T.
 \label{eq:first-pivot-system}
\end{equation}
We count the solutions of this system in three disjoint cases.

\paragraph{Case 1: $a\ne0$.}
Every solution $u$ of $u^2=a$ is nonzero.  For such a fixed $u$, the
middle equation in \eqref{eq:first-pivot-system} has the unique solution
\[
 x=u^{-1}b.
\]
Since $u^2=a$, we have $u^{-2}=a^{-1}$, and hence
\[
 xx^T=(u^{-1}b)(u^{-1}b)^T=u^{-2}bb^T=a^{-1}bb^T.
\]
The last equation in \eqref{eq:first-pivot-system} therefore becomes
\[
 V^TV=D-a^{-1}bb^T.
\]
For each of the $\rho_q(a)$ possible values of $u$, there are exactly
$\nu_q(D-a^{-1}bb^T)$ possible matrices $V$.  The value of $x$ is then
forced, and distinct choices of $u$ give distinct matrices $U$ because
they have different $(1,1)$-entries.  Consequently
\[
 \nu_q(A)=\rho_q(a)\nu_q(D-a^{-1}bb^T).
\]
This argument also covers the possibility $\rho_q(a)=0$, in which case
the fiber is empty.

\paragraph{Case 2: $a=0$ and $b\ne0$.}
Because $\F_q$ is a field, the equation $u^2=0$ forces $u=0$.  The
middle equation in \eqref{eq:first-pivot-system} would then read
$0=b$, contradicting $b\ne0$.  Thus no Cholesky root exists and
$\nu_q(A)=0$.

\paragraph{Case 3: $a=0$ and $b=0$.}
Again $u=0$, but now the equation $ux=b$ is satisfied by every
$x\in\F_q^{n-1}$.  Once $x$ is fixed, the only remaining condition is
\[
 V^TV=D-xx^T,
\]
which has $\nu_q(D-xx^T)$ solutions $V$.  The block decomposition of
$U$ is unique, so solution sets belonging to different choices of $x$
are disjoint.  Summing their cardinalities gives
\[
 \nu_q(A)=\sum_{x\in\F_q^{n-1}}\nu_q(D-xx^T).
\]
The three cases exhaust all possibilities for $(a,b)$ and prove the
recursion.
\end{proof}
\begin{corollary}
\label{cor:binary-recursion}
For
\[
 A=\begin{pmatrix}a&b^T\\b&D\end{pmatrix}\in\Sym_n(\F_2),
\]
the recursion becomes
\[
 \nu(A)=
 \begin{cases}
  \nu(D+bb^T),&a=1,\\[1mm]
  0,&a=0,\ b\ne0,\\[1mm]
  \displaystyle\sum_{x\in\F_2^{n-1}}\nu(D+xx^T),&a=0,\ b=0.
 \end{cases}
\]
\end{corollary}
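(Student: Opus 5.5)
The plan is to specialize \Cref{thm:general-recursion} to $q=2$ and simplify each of its three cases using the arithmetic of $\F_2$. No new counting argument should be needed: the corollary is purely a translation of the general recursion.

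\emph{Case $a\ne0$.} Over $\F_2$ the only nonzero element is $a=1$. The Frobenius map is the identity on $\F_2$, so $u^2=1$ has exactly one solution. Hence $\rho_2(1)=1$, as already noted in the discussion of $\rho_q$ for even $q$. We also have $a^{-1}=1$ and $-1=1$ in characteristic two. Together these give
\[
D-a^{-1}bb^T=D-bb^T=D+bb^T,
\]
so the first case of the general recursion becomes $\nu(D+bb^T)$.

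\emph{Case $a=0$, $b\ne0$.} This case transfers verbatim and gives $\nu(A)=0$.

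\emph{Case $a=b=0$.} Using $-xx^T=xx^T$ over $\F_2$, the sum $\sum_{x}\nu_2(D-xx^T)$ is rewritten as $\sum_{x\in\F_2^{n-1}}\nu(D+xx^T)$. Recall that we abbreviate $\nu_2$ to $\nu$.

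There is no genuine obstacle here. The only point needing care is the sign identity $-M=M$ for every matrix $M$ over $\F_2$, which holds entrywise because $\F_2$ has characteristic two.
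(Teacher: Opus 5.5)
Your proposal is correct and matches the paper's proof: both substitute $\rho_2(1)=1$, the fact that $a=1$ is the only nonzero pivot, and the agreement of subtraction and addition in characteristic two into \Cref{thm:general-recursion}. Your explicit observation that $a^{-1}=1$ spells out a step the paper leaves implicit.
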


\begin{proof}
The square map on $\F_2$ is the identity, so $\rho_2(1)=1$ and $1$ is
the only nonzero value of $a$.  Moreover subtraction and addition agree
in characteristic two.  Substituting these facts into
\Cref{thm:general-recursion} gives the stated formulas.
\end{proof}

\begin{corollary}
\label{cor:existence}
A symmetric matrix $A$ over a finite field has an upper triangular
Cholesky root if and only if at least one branch produced by repeated
application of \Cref{thm:general-recursion} reaches the empty $0\times 0$ target matrix.
\end{corollary}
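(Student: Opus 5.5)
The plan is an induction on $n$ in which \Cref{thm:general-recursion} is read as a branching rule. First I will make the phrase ``branch produced by repeated application'' precise. Define a rooted tree $\mathcal B(A)$ for $A\in\Sym_n(\F_q)$ with $n\ge1$. If $a\ne0$ and $\rho_q(a)>0$, the node $A$ has the single child $D-a^{-1}bb^T$. If $a\ne0$ and $\rho_q(a)=0$, or if $a=0$ and $b\ne0$, the node $A$ is a dead leaf. If $a=b=0$, the node $A$ has one child $D-xx^T$ for each $x\in\F_q^{n-1}$. The empty $0\times0$ matrix is a successful leaf. Every child has size $n-1$, so all branches terminate after at most $n$ steps. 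The claim then reads: $\nu_q(A)>0$ if and only if some root-to-leaf path of $\mathcal B(A)$ ends at the empty matrix.

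I prove this by induction on $n$. The base case $n=0$ holds by the convention $\nu_q(\varnothing)=1$. For $n\ge1$, the key observation is that in each case of \Cref{thm:general-recursion}, $\nu_q(A)$ is a sum of nonnegative integers $\nu_q(C)$ over the children $C$ of $A$, possibly multiplied by the positive weight $\rho_q(a)$.
\begin{itemize}
\item Case 1 with $\rho_q(a)\ge1$: $\nu_q(A)=\rho_q(a)\nu_q(C)$ for the unique child $C$.
\item Case 3: $\nu_q(A)=\sum_x\nu_q(D-xx^T)$.
\item Dead leaves: $\nu_q(A)=0$, by Case 2 or by $\rho_q(a)=0$.
\end{itemize}
Hence $\nu_q(A)>0$ if and only if $A$ is not a dead leaf and some child $C$ has $\nu_q(C)>0$. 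Applying the induction hypothesis to that $C$, of size $n-1$, this holds if and only if some branch through $C$ reaches the empty matrix. Prefixing the edge $A\to C$ then gives a successful branch from $A$, and the argument runs in both directions.

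As a sanity check, the constructive direction can also be seen directly. A successful branch records choices $u_1,x_1,u_2,x_2,\dots$, and the block formula for $U^TU$ reassembles these into a root $U$. Conversely, any root $U$ determines a branch through its successive first rows. I do not expect a genuine obstacle. The only delicate step is fixing the convention that a Case 1 node with $\rho_q(a)=0$, for instance a nonsquare pivot in odd characteristic, counts as a dead end rather than as a branch. With that convention stated, the positivity of the multiplier $\rho_q(a)$ in the surviving branches is all that is needed.
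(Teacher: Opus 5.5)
Your proof is correct and is essentially the paper's argument. Both read \Cref{thm:general-recursion} as a recursion tree whose successful leaves are the empty target: the paper sums branch weights globally and traces roots along branches, while you run a cleaner induction on $n$. Your explicit convention that a node with $a\ne0$ and $\rho_q(a)=0$ is a dead leaf is genuinely needed, and it repairs a slip in the paper. The paper's tree gives every nonzero-pivot node a child and then asserts that every branch reaching $\varnothing$ has positive weight. This fails in odd characteristic: for a nonsquare $g$ and $A=(g)$, the single branch reaches the empty matrix although $\nu_q(A)=0$.
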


\begin{proof}
We prove the equivalence by interpreting the recursion in
Theorem~\ref{thm:general-recursion} as a branching process that
partitions the set of Cholesky roots into disjoint classes.

At a generic step, the target matrix is written as
\[
A=\begin{pmatrix} a & b^T \\ b & D \end{pmatrix},
\]
and every candidate root is written as
\[
U=\begin{pmatrix} u & x^T \\ 0 & V \end{pmatrix}.
\]
The proof of Theorem~\ref{thm:general-recursion} shows that the
condition $U^TU=A$ is equivalent to the system
\[
u^2=a,\qquad ux=b,\qquad V^TV=D-xx^T.
\]
Thus each valid choice of the first diagonal entry $u$ and, when
necessary, of the vector $x$, selects a disjoint subset of the fiber.
The remaining block $V$ must then be a Cholesky root of a smaller
symmetric matrix.

Consider the recursion tree whose nodes are labeled by the target
matrices that appear during this process. The root node is $A$. At
a node labeled by
\[
\begin{pmatrix} a & b^T \\ b & D \end{pmatrix},
\]
exactly one of the following occurs:
\begin{enumerate}[label=\textup{(\roman*)}]
\item If $a\ne0$, then the node has exactly one child, labeled by
      $D-a^{-1}bb^T$, and the contribution from this node to the
      parent is multiplied by the nonnegative integer
      $\rho_q(a)=\#\{u\in\F_q:u^2=a\}$.
\item If $a=0$ and $b\ne0$, then the node is a dead end: the
      recursion returns $0$, and no branch continues from it.
\item If $a=0$ and $b=0$, then for every $x\in\F_q^{n-1}$ the
      node has a child labeled by $D-xx^T$. The contributions of
      these children are added, and the choice of $x$ keeps the
      resulting subsets disjoint.
\end{enumerate}
The recursion terminates precisely when the target matrix becomes the
empty $0\times 0$ matrix. By convention,
\[
\nu_q(\varnothing)=1.
\]
A branch that reaches the empty target therefore contributes a positive
integer to the total count: it is the product of the positive numbers
$\rho_q(a)$ encountered at the nonzero-pivot nodes along that branch.
A branch that reaches a dead end of type (ii) contributes zero.

Consequently, the total number $\nu_q(A)$ is the sum of the
contributions of all branches that reach the empty target. Since every
contribution of such a successful branch is a positive integer, the sum
is positive if and only if at least one successful branch exists.

It remains to connect this combinatorial condition to the existence of
a root. If $U\in\T_n(\F_q)$ is a Cholesky root of $A$, then at each
recursive step the actual first diagonal entry $u$ of the appropriate
block of $U$ satisfies $u^2=a$. Hence at every nonzero-pivot node
the corresponding value $\rho_q(a)$ is at least $1$, because $u$
itself is a solution. When the first row is zero, the actual vector
$x$ selects the child through which the recursion continues. Thus
following the root $U$ through the recursion yields a branch that
never reaches a dead end of type (ii) and eventually arrives at the
empty matrix. Therefore the existence of a root implies the existence
of a successful branch.

Conversely, if a successful branch exists, then tracing it backwards
constructs a Cholesky root: at each nonzero-pivot node, choose one of
the $\rho_q(a)>0$ solutions $u$, set $x=u^{-1}b$, and recursively
construct the remaining block $V$; at each zero-pivot node with zero
first row, use the $x$ that labels the branch. The terminal node is
the empty matrix, which has the empty Cholesky root by convention.
Assembling the blocks gives a matrix $U\in\T_n(\F_q)$ with
$U^TU=A$.

Hence $A$ has an upper triangular Cholesky root if and only if the
recursion in Theorem~\ref{thm:general-recursion} has at least one branch
that reaches the empty target.
\end{proof}
We next identify the regular part of the Cholesky map.  For $A\in\Sym_n(\F_q)$, let $A_{[k],[k]}$ denote the $k\times k$ leading principal submatrix of $A$ (i.e. the submatrix formed by the first $k$ rows and columns, with indices in $[k]$). Define
\[
 \Delta_k(A)=\det A_{[k],[k]}\quad(1\le k\le n),\qquad \Delta_0(A)=1,
\]
where $[k]=\{1,\ldots,k\}$.  Call $A$ an \emph{LPM matrix} if
$\Delta_k(A)\ne0$ for every $k$.

\begin{corollary}
\label{cor:regular-fibers}
Let $A\in\Sym_n(\F_q)$ be an LPM matrix and put
\[
 \delta_k(A)=\frac{\Delta_k(A)}{\Delta_{k-1}(A)}
 \qquad(1\le k\le n).
\]
Then
\begin{equation}
 \nu_q(A)=\prod_{k=1}^n\rho_q\bigl(\delta_k(A)\bigr).
 \label{eq:regular-fiber-product}
\end{equation}
Consequently,
\[
 \nu_q(A)=
 \begin{cases}
  1,&q\text{ even},\\[1mm]
  2^n,&q\text{ odd and }\chi(\Delta_k(A))=1
       \text{ for every }k,\\[1mm]
  0,&q\text{ odd and }\chi(\Delta_k(A))=-1
       \text{ for at least one }k.
 \end{cases}
\]
\end{corollary}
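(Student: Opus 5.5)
We argue by induction on $n$, applying only the first branch of \Cref{thm:general-recursion}. For $n=0$ the product is empty and $\nu_q(\varnothing)=1$. For $n\ge1$ write
\[
A=\begin{pmatrix}a&b^T\\ b&D\end{pmatrix}.
\]
Here $a=\Delta_1(A)=\delta_1(A)\ne0$, so the first case of the theorem gives
\[
\nu_q(A)=\rho_q(\delta_1(A))\,\nu_q(S),\qquad S:=D-a^{-1}bb^T .
\]
It remains to show that $S\in\Sym_{n-1}(\F_q)$ is again an LPM matrix with
\[
\Delta_k(S)=\Delta_{k+1}(A)/\Delta_1(A)\qquad(0\le k\le n-1).
\]
Given this, $\delta_k(S)=\delta_{k+1}(A)$, and the induction hypothesis applied to $S$ yields \eqref{eq:regular-fiber-product}.

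The minor identity is the one step needing justification. Set
\[
L=\begin{pmatrix}1&0\\ -a^{-1}b&I_{n-1}\end{pmatrix},
\]
which is unit lower triangular. Then $LAL^T=\diag(a,S)$. Because $L$ is unit lower triangular, its leading $(k+1)\times(k+1)$ block is the corresponding block of $L$, and the leading $(k+1)\times(k+1)$ block of $LAL^T$ equals $L_{[k+1]}A_{[k+1],[k+1]}L_{[k+1]}^T$. Taking determinants,
\[
\Delta_{k+1}(A)=a\,\Delta_k(S).
\]
Every $\Delta_{k+1}(A)$ is nonzero, so every $\Delta_k(S)$ is nonzero, which shows $S$ is LPM and gives the claimed identity. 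Symmetry of $S$ is immediate from its definition.

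For the consequences, note that every $\delta_k\ne0$. If $q$ is even, each factor $\rho_q(\delta_k)$ equals $1$, so $\nu_q(A)=1$. If $q$ is odd, each factor is $1+\chi(\delta_k)\in\{0,2\}$. The product is therefore $2^n$ when every $\chi(\delta_k)=1$, and $0$ otherwise.

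To rephrase this in terms of the $\Delta_k$, use multiplicativity of $\chi$ together with $\Delta_k=\prod_{j\le k}\delta_j$.
\begin{itemize}
\item If all $\chi(\delta_j)=1$, then every $\chi(\Delta_k)=1$.
\item Conversely, $\chi(\delta_k)=\chi(\Delta_k)\chi(\Delta_{k-1})$ because $\chi(\Delta_{k-1})^{-1}=\chi(\Delta_{k-1})$. So if all $\chi(\Delta_k)=1$, then all $\chi(\delta_k)=1$.
\end{itemize}
Hence some $\chi(\delta_k)=-1$ exactly when some $\chi(\Delta_k)=-1$, and in that case $\nu_q(A)=0$. The only real work in the proof is the Schur-complement minor identity; everything else is bookkeeping.
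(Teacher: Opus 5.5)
Your proposal is correct and follows essentially the same route as the paper. Both use induction via the nonzero-pivot branch of \Cref{thm:general-recursion} and the Schur-complement identity $\Delta_{k+1}(A)=a\,\Delta_k(S)$ obtained by unit triangular congruence. Your use of multiplicativity of $\chi$ in place of the paper's products-and-quotients-of-squares argument is only a cosmetic difference.
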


\begin{proof}
We prove the product formula \eqref{eq:regular-fiber-product} by induction on $n$. For $n=0$, the empty product is $1$, and $\nu_q(\varnothing)=1$, so the formula holds.

Assume $n\ge 1$ and write
\[
 A=\begin{pmatrix}a&b^T\\b&D\end{pmatrix},
\]
where $a\in\F_q$, $b\in\F_q^{n-1}$, and $D\in\Sym_{n-1}(\F_q)$. Since $A$ is an LPM matrix, its first leading principal minor $\Delta_1(A)=a$ is nonzero. Hence $a\ne0$, and by Theorem~\ref{thm:general-recursion},
\[
 \nu_q(A)=\rho_q(a)\,\nu_q(S),
\]
where
\[
 S=D-a^{-1}bb^T.
\]

We claim that $S$ is also an LPM matrix. For $1\le j\le n-1$, let $b_j$ denote the vector consisting of the first $j$ entries of $b$, and write $D_j=D_{[j],[j]}$. The leading $(j+1)\times(j+1)$ block of $A$ is
\[
 A_{j+1}=\begin{pmatrix}a&b_j^T\\b_j&D_j\end{pmatrix}.
\]
Multiplying on the left and right by determinant-one upper and lower triangular matrices gives
\[
 \begin{pmatrix}1&0\\-a^{-1}b_j&I_j\end{pmatrix}
 A_{j+1}
 \begin{pmatrix}1&-a^{-1}b_j^T\\0&I_j\end{pmatrix}
 =
 \begin{pmatrix}a&0\\0&D_j-a^{-1}b_jb_j^T\end{pmatrix}.
\]
Taking determinants yields
\begin{equation}\label{eq:schur-leading-minors}
 \Delta_{j+1}(A)
 =a\det(D_j-a^{-1}b_jb_j^T)
 =a\,\Delta_j(S).
 \end{equation}
Because $A$ is an LPM matrix, $\Delta_{j+1}(A)\ne0$ for every $j$. Since $a\ne0$, it follows from \eqref{eq:schur-leading-minors} that $\Delta_j(S)\ne0$ for every $j=1,\ldots,n-1$. Thus $S$ is an LPM matrix of order $n-1$.

By the induction hypothesis,
\[
 \nu_q(S)=\prod_{j=1}^{n-1}
   \rho_q\!\left(\frac{\Delta_j(S)}{\Delta_{j-1}(S)}\right).
\]
Therefore
\[
 \nu_q(A)=\rho_q(a)\prod_{j=1}^{n-1}
   \rho_q\!\left(\frac{\Delta_j(S)}{\Delta_{j-1}(S)}\right).
\]
Here $a=\delta_1(A)$. It remains to identify the remaining factors with $\rho_q(\delta_k(A))$ for $k=2,\ldots,n$.

From \eqref{eq:schur-leading-minors} we have
\[
 \Delta_j(S)=\frac{\Delta_{j+1}(A)}{a},\qquad j=1,\ldots,n-1.
\]
In particular, for $j=1$,
\[
 \Delta_1(S)=\frac{\Delta_2(A)}{a}
 =\frac{\Delta_2(A)}{\Delta_1(A)}
 =\delta_2(A),
\]
so the first factor in the product over $j$ is
\[
 \rho_q\!\left(\frac{\Delta_1(S)}{\Delta_0(S)}\right)
 =\rho_q(\Delta_1(S))
 =\rho_q(\delta_2(A)).
\]
For $j\ge2$, using \eqref{eq:schur-leading-minors} and the definition of $\delta_k$, we get
\[
 \frac{\Delta_j(S)}{\Delta_{j-1}(S)}
 =
 \frac{\Delta_{j+1}(A)/a}{\Delta_j(A)/a}
 =
 \frac{\Delta_{j+1}(A)}{\Delta_j(A)}
 =
 \delta_{j+1}(A).
\]
Hence the product over $j=1,\ldots,n-1$ becomes
\[
 \prod_{j=1}^{n-1}
 \rho_q\!\left(\frac{\Delta_j(S)}{\Delta_{j-1}(S)}\right)
 =
 \prod_{k=2}^{n}\rho_q(\delta_k(A)).
\]
Combining this with the initial factor $\rho_q(a)=\rho_q(\delta_1(A))$, we obtain
\[
 \nu_q(A)
 =
 \rho_q(\delta_1(A))\prod_{k=2}^{n}\rho_q(\delta_k(A))
 =
 \prod_{k=1}^{n}\rho_q(\delta_k(A)).
\]
This proves \eqref{eq:regular-fiber-product}.

Now we specialize to even and odd characteristics. If $q$ is even, then every nonzero element of $\F_q$ has exactly one square root, because the Frobenius map $x\mapsto x^2$ is bijective on $\F_q$. Hence $\rho_q(\alpha)=1$ for every $\alpha\ne0$, and the product equals $1$.

Suppose $q$ is odd. For $\alpha\ne0$, the equation $u^2=\alpha$ has two solutions if $\alpha$ is a square, and no solutions otherwise. Equivalently, $\rho_q(\alpha)=1+\chi(\alpha)$, where $\chi$ is the quadratic character. Thus each factor in the product is $2$ when $\delta_k(A)$ is a square and $0$ when it is a nonsquare.

It remains to relate the square classes of the ratios $\delta_k$ to those of the leading principal minors $\Delta_k$. We have
\[
 \Delta_k(A)=\prod_{i=1}^k \delta_i(A),
 \qquad
 \delta_k(A)=\frac{\Delta_k(A)}{\Delta_{k-1}(A)}.
\]
If every $\delta_k(A)$ is a square, then every $\Delta_k(A)$, being a product of squares, is also a square. Conversely, if every $\Delta_k(A)$ is a square, then every quotient $\delta_k(A)=\Delta_k(A)/\Delta_{k-1}(A)$ is a square because the quotient of two nonzero squares in a finite field is again a square. Therefore all $\delta_k(A)$ are squares if and only if all $\Delta_k(A)$ are squares.

Consequently, the product is $2^n$ exactly when every leading principal minor $\Delta_k(A)$ is a square, and it is $0$ if at least one $\Delta_k(A)$ is a nonsquare. This gives the stated classification.
\end{proof}
\begin{remark}
The all-square cone in Corollary $\ref{cor:regular-fibers}$ is the finite-field
positive-definite cone of \cite{CooperHannaWhitlatch2024}.  The larger
regular sign cones and their generalized factorizations
$A=LA_{\epsilon}L^T$ are studied in \cite{Vishwakarma2027}.  Thus
Corollary $\ref{cor:regular-fibers}$ is the fiber-size counterpart for the standard
map $U\mapsto U^TU$.  It also isolates the genuinely singular
phenomenon: when a pivot vanishes and the first row vanishes with it,
the recursion branches over the rank-one perturbations $xx^T$.  This
branching is absent on every LPM cone.
\end{remark}

\subsection{Computational form}
The recursion is exact but may branch exponentially when successive
leading rows vanish.  Memoization identifies repeated target matrices
and substantially reduces low-order computations.  An implementation
over prime fields is given in \Cref{app:general-code}; its binary
specialization reproduces Corollary $\ref{cor:binary-recursion}$.  The algorithm
does not assert polynomial worst-case complexity; orbit compression of
the singular branching states remains an open problem.

\section{Upper Triangular Targets: Rank-Refined Recursion
for the Diagonal Case}
\label{sec:diagonal}

\subsection{Why the Problem Reduces to Diagonal Matrices}

\begin{proposition}
\label{prop:upper-is-diagonal}
If an upper triangular matrix $A$ admits a Cholesky root, then $A$ must
be diagonal.
\end{proposition}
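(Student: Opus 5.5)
The argument is a one-line observation: any Cholesky root forces $A$ to be symmetric, and a symmetric upper triangular matrix is diagonal. Suppose $A$ is upper triangular and $U\in\T_n(\F_q)$ satisfies $U^TU=A$. I will first note, as in the remark following the definition of a Cholesky root, that
\[
A^T=(U^TU)^T=U^TU=A,
\]
so $A$ is symmetric.

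For the second step, take indices $i>j$. Upper triangularity gives $A_{ij}=0$, and symmetry gives $A_{ji}=A_{ij}=0$. Hence every off-diagonal entry of $A$ vanishes, and $A$ is diagonal.

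No step here is a real obstacle. The only point worth recording is that the statement uses nothing about the field or about the triangular shape of $U$ beyond $A=U^TU$. For later use in this section, I would also note how \Cref{thm:general-recursion} reads for a diagonal target $\diag(d_1,\dots,d_n)$. Here $b=0$ at the first step, so the case $a=0,\ b\ne0$ never occurs. The recursion then either multiplies by $\rho_q(d_1)$ and passes to $\diag(d_2,\dots,d_n)$, or, when $d_1=0$, branches over all $D-xx^T$. This sets up the diagonal state recursion that follows.
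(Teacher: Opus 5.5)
Your proof is correct and is exactly the paper's argument: $A=U^TU$ gives $A^T=A$, and a symmetric upper triangular matrix has every off-diagonal entry equal to zero. Your closing remark on how the first-pivot recursion reads for $\diag(d_1,\dots,d_n)$ is also accurate, provided you keep the qualifier ``at the first step'': after a branch the new target $D-xx^T$ need not be diagonal.
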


\begin{proof}
Suppose $A=U^TU$ for some upper triangular matrix $U$. Then
\[
A^T=(U^TU)^T=U^T(U^T)^T=U^TU=A,
\]
so $A$ is symmetric.

Now let $A=(a_{ij})_{1\le i,j\le n}$. Since $A$ is upper triangular,
we have
\[
a_{ij}=0 \quad\text{whenever } i>j.
\]
Since $A$ is symmetric, we also have
\[
a_{ij}=a_{ji} \quad\text{for all } i,j.
\]
If $i<j$, then $j>i$, so by upper triangularity $a_{ji}=0$. Hence
\[
a_{ij}=a_{ji}=0 \quad\text{whenever } i<j.
\]
Thus $a_{ij}=0$ for all $i\neq j$, so $A$ is diagonal.
\end{proof}
Thus the ``upper triangular target matrix'' problem in the original
paper is, strictly speaking, the problem of counting Cholesky roots of
arbitrary binary diagonal matrices.

\subsection{One-Step Extension Lemma}

Let
\[
D\in\Sym_m(\F_2),\qquad \rank D=s,
\]
and suppose
\[
V^TV=D,\qquad \rank V=r.
\]
Consider
\[
U=
\begin{pmatrix}
V&w\\
0&c
\end{pmatrix}
\]
satisfying
\[
U^TU=D\oplus[d],\qquad d\in\F_2.
\]
Expanding the condition gives
\[
V^Tw=0,\qquad w^Tw+c=d.
\]

\begin{lemma}
\label{lem:intersection}
Under the above assumptions,
\[
\dim\bigl(\Col V\cap\Null V^T\bigr)=r-s.
\]
\end{lemma}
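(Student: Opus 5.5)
We need $\dim(\Col V\cap\Null V^T)=r-s$, where $V$ is $m\times m$, $\rank V=r$, and $D=V^TV$ has rank $s$. The plan is a rank--nullity argument applied to the restriction of $V^T$ to the column space of $V$. It uses nothing specific to $\F_2$ or to triangularity.

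Consider the linear map
\[
 \phi:\Col V\longrightarrow \F_2^m,\qquad y\mapsto V^Ty .
\]
Its kernel is exactly $\Col V\cap\Null V^T$. By rank--nullity,
\[
 \dim(\Col V\cap\Null V^T)=\dim\Col V-\dim\phi(\Col V)=r-\dim\phi(\Col V).
\]
It therefore suffices to identify the image of $\phi$.

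Every element of $\Col V$ has the form $Vz$ with $z\in\F_2^m$. Hence
\[
\phi(\Col V)=\{V^TVz : z\in\F_2^m\}=\Col(V^TV)=\Col D,
\]
which has dimension $\rank D=s$. Substituting into the rank--nullity identity gives $r-s$, as claimed.

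No step is a genuine obstacle. The only point needing care is that the image of $\phi$ is computed as $V^T(\Col V)=\Col(V^TV)$, not by comparing ranks of $V$ and $V^T$ separately; the composition argument handles this directly. As a by-product, the argument shows $s\le r$ automatically, so the formula never yields a negative dimension.
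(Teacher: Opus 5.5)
Your argument is correct, and it differs from the paper's mainly in which map rank--nullity is applied to. The paper uses $\theta:\Null D\to\Col V\cap\Null V^T$, $x\mapsto Vx$. It checks that $\theta$ is well defined and surjective with kernel $\Null V$, and concludes $\dim(\Col V\cap\Null V^T)=\dim\Null D-\dim\Null V=(m-s)-(m-r)$. So the paper realizes the intersection as an image. You restrict $V^T$ to $\Col V$ instead, so the intersection is the kernel and the image is $V^T(\Col V)=\Col D$. The two proofs are the nullity and rank forms of the standard identity $\rank(AB)=\rank B-\dim(\Col B\cap\Null A)$ with $A=V^T$, $B=V$. Your version is shorter: it needs no surjectivity check or kernel identification, and $s\le r$ falls out automatically. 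The paper's version has a different advantage: it shows every vector of the intersection as $Vx$ with $x\in\Null D$. That is exactly the parametrization reused in Lemma~\ref{lem:extension} to get $w^Tw=x^TDx=0$ for such $w$, which pins down the last diagonal entry on the rank-preserving extensions.
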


\begin{proof}
We prove the lemma by constructing an explicit isomorphism between
$\Null D/\Null V$ and $\Col V\cap\Null V^T$.

First note that since $V^TV=D$, we have
\[
\Null V\subseteq\Null D.
\]
Indeed, if $x\in\Null V$, then $Vx=0$, and hence
\[
Dx=V^TVx=V^T(0)=0,
\]
so $x\in\Null D$.

Define the map
\[
\theta:\Null D\longrightarrow \Col V\cap\Null V^T,
\qquad x\longmapsto Vx.
\]
We verify that $\theta$ is well defined. If $x\in\Null D$, then
$Vx\in\Col V$ by the definition of the column space. Moreover,
\[
V^T(Vx)=V^TVx=Dx=0,
\]
so $Vx\in\Null V^T$. Thus $Vx\in\Col V\cap\Null V^T$, as claimed.
The map $\theta$ is linear because it is the restriction of the linear
map $x\mapsto Vx$ to the subspace $\Null D$.

We now show that $\theta$ is surjective. Let
$w\in\Col V\cap\Null V^T$. Since $w\in\Col V$, there exists
$x\in\F_2^m$ such that $w=Vx$. Since $w\in\Null V^T$, we have
\[
0=V^Tw=V^TVx=Dx,
\]
so $x\in\Null D$. Therefore every element of $\Col V\cap\Null V^T$ is
of the form $\theta(x)$ for some $x\in\Null D$, proving surjectivity.

Next we compute the kernel of $\theta$. By definition,
\[
\ker\theta=\{x\in\Null D: Vx=0\}.
\]
If $x\in\Null V$, then $Vx=0$, and by the observation above
$x\in\Null D$, so $x\in\ker\theta$. Conversely, if $x\in\ker\theta$,
then $Vx=0$, so $x\in\Null V$. Hence
\[
\ker\theta=\Null V.
\]

Now apply the rank--nullity theorem to the linear map $\theta$:
\[
\dim\Null D=\dim\ker\theta+\dim\operatorname{im}\theta.
\]
Since $\ker\theta=\Null V$ and
$\operatorname{im}\theta=\Col V\cap\Null V^T$, this becomes
\[
\dim\Null D
=\dim\Null V+\dim(\Col V\cap\Null V^T).
\]
Solving for the dimension of the intersection gives
\[
\dim(\Col V\cap\Null V^T)
=\dim\Null D-\dim\Null V.
\]
Finally, using $\dim\Null D=m-\rank D=m-s$ and
$\dim\Null V=m-\rank V=m-r$, we obtain
\[
\dim(\Col V\cap\Null V^T)
=(m-s)-(m-r)=r-s.
\]
This completes the proof.
\end{proof}
The following lemma counts the possible one-step extensions of a fixed root $V$ of rank $r$ and determines exactly when such an extension preserves the rank or increases it by one.
\begin{lemma}
\label{lem:extension}
Fix a root $V$ of rank $r$ as above.
\begin{enumerate}[label=\textup{(\roman*)}]
\item The number of possible vectors $w\in\Null V^T$ is $2^{m-r}$,
      and each $w$ uniquely determines $c=d+w^Tw$.
\item If $d=0$, exactly $2^{r-s}$ of these extensions preserve the rank
      $r$, and the remaining $2^{m-r}-2^{r-s}$ extensions have rank
      $r+1$.
\item If $d=1$, every extension has rank $r+1$.
\end{enumerate}
\end{lemma}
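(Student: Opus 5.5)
The plan is to read everything off the two expanded equations $V^Tw=0$ and $w^Tw+c=d$, and then compute the rank of $U$ from its block form. For (i), the extensions of the fixed root $V$ are the pairs $(w,c)$ satisfying these equations. The first equation says precisely that $w\in\Null V^T$. Since $\rank V^T=\rank V=r$, this null space has dimension $m-r$ and hence $2^{m-r}$ elements over $\F_2$. The second equation then forces $c=d-w^Tw=d+w^Tw$ in characteristic two. So each admissible $w$ determines exactly one extension.

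For the rank, I would use the standard fact for a block upper triangular matrix
\[
U=\begin{pmatrix}V&w\\0&c\end{pmatrix}.
\]
If $c\ne0$, the last row is independent of the first $m$ rows, because those rows vanish in the last coordinate. Then $\rank U=\rank V+1=r+1$. If $c=0$, then $U$ has zero last row, so
\[
\rank U=\rank\bigl(V\mid w\bigr),
\]
which equals $r$ if $w\in\Col V$ and $r+1$ otherwise. Thus the rank is preserved exactly when $c=0$ and $w\in\Col V$; in every other case it rises by exactly one.

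For (ii), with $d=0$ we have $c=w^Tw$. Over $\F_2$ one has $w^Tw=\sum w_i^2=\bigl(\sum w_i\bigr)^2$, so $c=0$ iff $\sum w_i=0$. The count of rank-preserving extensions is therefore the number of $w\in\Col V\cap\Null V^T$ with $w^Tw=0$. The key point, which I expect to be the only real step, is that every such $w$ automatically has $w^Tw=0$. Writing $w=Vx$, we get $w^Tw=x^TV^TVx=x^TV^Tw=0$, since $V^Tw=0$. Hence the rank-preserving extensions are exactly the elements of $\Col V\cap\Null V^T$. By \Cref{lem:intersection} there are $2^{r-s}$ of them, and the remaining $2^{m-r}-2^{r-s}$ extensions have rank $r+1$.

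For (iii), with $d=1$, a rank-preserving extension would need $c=0$ and $w\in\Col V\cap\Null V^T$. The computation above gives $w^Tw=0$ for such $w$, so $c=1+w^Tw=1\ne0$, a contradiction. Hence every extension has rank $r+1$.
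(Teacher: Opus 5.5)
Your proposal follows the paper's proof. It uses the same two equations and the same rank criterion, namely that the rank is preserved exactly when $c=0$ and $w\in\Col V$. It uses the same observation that every $w\in\Col V\cap\Null V^T$ has $w^Tw=0$; your computation $w^Tw=x^TV^Tw=0$ is a slightly more direct version of the paper's $x^TDx=0$. And it makes the same appeal to \Cref{lem:intersection}.

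One step is justified incorrectly, though its conclusion is true. For $c\ne0$ you say the last row is independent of the first $m$ rows because those rows vanish in the last coordinate. But the first $m$ rows are $(V\mid w)$, and their last coordinates are the entries of $w$. For example, take $m=1$, $V=0$, $w=(1)$, $d=0$. Then $c=1$ and $U=\begin{psmallmatrix}0&1\\0&1\end{psmallmatrix}$, whose two rows coincide. Moreover, even when the last row is independent, you only get $\rank U=\rank(V\mid w)+1$, which equals $\rank V+1$ only when $w\in\Col V$.

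The repair is to argue with columns, as the paper does. The first $m$ columns of $U$ form $\begin{psmallmatrix}V\\0\end{psmallmatrix}$, which has rank $r$ and zero last coordinate. So a last column $\begin{psmallmatrix}w\\c\end{psmallmatrix}$ with $c\ne0$ lies outside their span, giving $\rank U=r+1$. Alternatively, subtract multiples of the last row to clear $w$ and reach $V\oplus[c]$.
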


\begin{proof}
Let
\[
U=\begin{pmatrix} V & w \\ 0 & c \end{pmatrix},
\qquad V\in\T_m(\F_2),\quad w\in\F_2^m,\quad c\in\F_2.
\]
The condition
\[
U^TU=D\oplus[d]
\]
expands to the two equations
\[
V^Tw=0
\qquad\text{and}\qquad
w^Tw+c^2=d.
\]
Over $\F_2$ we have $c^2=c$, so the second equation is equivalent to
\[
c=d+w^Tw.
\]
Thus, for any fixed $w$ satisfying $V^Tw=0$, the entry $c$ is uniquely
determined by the formula above. Conversely, every pair $(w,c)$ with
$w\in\Null V^T$ and $c=d+w^Tw$ gives a valid extension. Hence the set
of all possible extensions is in bijection with $\Null V^T$. Since
$\dim\Null V^T=m-\rank V=m-r$, we obtain
\[
|\Null V^T|=2^{m-r}.
\]
This proves part (i).

To determine the rank of the extended matrix, we compare the column
space of $U$ with that of $V$. Identify $\F_2^m$ with the subspace of
$\F_2^{m+1}$ consisting of vectors whose last coordinate is $0$. Under
this identification, the column space of $V$ becomes the subspace
\[
\Col V\times\{0\}
=\left\{\begin{pmatrix} Vy\\0\end{pmatrix}:y\in\F_2^m\right\}.
\]
The last column of $U$ is
\[
\begin{pmatrix} w\\ c\end{pmatrix}.
\]
Therefore the rank of $U$ is the same as the rank of $V$ if and only if
the last column lies in the span of the previous columns, that is,
\[
\begin{pmatrix} w\\ c\end{pmatrix}
\in \Col V\times\{0\}.
\]
This is equivalent to the two conditions
\[
w\in\Col V
\qquad\text{and}\qquad
c=0.
\]
If these conditions fail, then the last column is linearly independent
of the previous columns, so the rank increases by exactly one:
\[
\rank U=\rank V+1.
\]

Now suppose $d=0$. For $w\in\Col V\cap\Null V^T$, write $w=Vx$ with
$x\in\F_2^m$. Since $w\in\Null V^T$,
\[
0=V^Tw=V^TVx=Dx,
\]
so $x\in\Null D$. Therefore
\[
w^Tw=x^TV^TVx=x^TDx=0.
\]
Consequently,
\[
c=d+w^Tw=0+0=0.
\]
Thus every $w\in\Col V\cap\Null V^T$ gives an extension with $c=0$ and
$w\in\Col V$, so the rank is preserved. By Lemma $\ref{lem:intersection}$,
\[
|\Col V\cap\Null V^T|=2^{r-s}.
\]
Hence exactly $2^{r-s}$ extensions preserve the rank $r$.

All remaining vectors $w\in\Null V^T\setminus\Col V$ satisfy
$w\notin\Col V$, so regardless of the value of $c$, the last column is
not in the column space of $V$, and therefore the rank increases to
$r+1$. There are
\[
2^{m-r}-2^{r-s}
\]
such extensions. This proves part (ii).

Finally suppose $d=1$. For any $w\in\Null V^T$, the entry $c$ is
determined by
\[
c=1+w^Tw.
\]
In particular, for $w\in\Col V\cap\Null V^T$, we have shown that
$w^Tw=0$, so
\[
c=1+0=1.
\]
Thus even if $w\in\Col V$, the last coordinate of the last column is
$1$, while every column of $V$ has last coordinate $0$. Therefore the
last column of $U$ can never lie in the column space of $V$. Hence
every extension increases the rank by exactly one:
\[
\rank U=\rank V+1=r+1.
\]
This proves part (iii) and completes the proof.
\end{proof}
\subsection{Recursion Theorem for Diagonal Matrices}

Let
\[
D_i=\diag(d_1,\ldots,d_i),\qquad d_j\in\{0,1\},
\]
and define
\[
s_i=\sum_{j=1}^i d_j,\qquad z_i=i-s_i.
\]
Here $s_i=\rank D_i$, and $z_i$ is the number of zero diagonal entries
among the first $i$ entries.

\begin{definition}
Let $h_i(t)$ be the number of $i\times i$ upper triangular matrices
$U$ such that
\[
U^TU=D_i,\qquad \rank U=s_i+t.
\]
We use the convention
\[
h_0(0)=1,\qquad h_0(t)=0\quad(t\ne0).
\]
\end{definition}
The following theorem gives a rank-refined recursion for the number of upper triangular Cholesky roots of an arbitrary binary diagonal matrix, using the excess rank as the state variable.
\begin{theorem}
\label{thm:diagonal-recurrence}
For every $i\ge0$,
\[
 h_i(t)=0\qquad\text{unless}\qquad
 0\le t\le\left\lfloor\frac{z_i}{2}\right\rfloor.
\]
For $i\ge1$, the nonzero states satisfy the following recurrences.

If $d_i=1$, then
\[
\boxed{
h_i(t)=2^{z_i-t}h_{i-1}(t).
}
\qquad
\left(0\le t\le\left\lfloor\frac{z_i}{2}\right\rfloor\right)
\]

If $d_i=0$, then
\[
 \boxed{h_i(0)=h_{i-1}(0),}
\]
and, for $1\le t\le\lfloor z_i/2\rfloor$,
\[
\boxed{
h_i(t)
=2^t h_{i-1}(t)
+\left(2^{z_i-t}-2^{t-1}\right)h_{i-1}(t-1).
}
\]
Finally,
\[
\boxed{
\nu(D_n)=\sum_{t=0}^{\lfloor z_n/2\rfloor}h_n(t).
}
\]
\end{theorem}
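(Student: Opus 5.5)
We will prove the theorem by induction on $i$, building roots of $D_i$ from roots of $D_{i-1}$ by adding a last column. Every $U\in\T_i(\F_2)$ decomposes uniquely as
\[
U=\begin{pmatrix}V&w\\0&c\end{pmatrix},\qquad V\in\T_{i-1}(\F_2),
\]
and $U^TU=D_{i-1}\oplus[d_i]$ holds exactly when $V^TV=D_{i-1}$, $V^Tw=0$ and $c=d_i+w^Tw$. So the roots of $D_i$ are partitioned according to their leading block $V$, which is a root of $D_{i-1}$. We apply \Cref{lem:extension} with $m=i-1$, $D=D_{i-1}$, $s=s_{i-1}$ and $r=s_{i-1}+t'$, where $t'$ is the excess rank of $V$.

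First we translate the lemma into excess-rank terms. Note that $m-r=(i-1)-s_{i-1}-t'=z_{i-1}-t'$ and $r-s=t'$.

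\emph{Case $d_i=1$.} Here $s_i=s_{i-1}+1$ and $z_i=z_{i-1}$. By \Cref{lem:extension}(iii), every one of the $2^{z_{i-1}-t'}$ extensions has rank $r+1=s_i+t'$, so the excess is preserved. Hence $h_i(t)=2^{z_i-t}h_{i-1}(t)$.

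\emph{Case $d_i=0$.} Here $s_i=s_{i-1}$ and $z_i=z_{i-1}+1$. By \Cref{lem:extension}(ii), $2^{t'}$ extensions keep excess $t'$, and $2^{z_{i-1}-t'}-2^{t'}$ extensions raise it to $t'+1$. Collecting contributions to excess $t$ gives
\[
h_i(t)=2^t h_{i-1}(t)+\bigl(2^{z_{i-1}-t+1}-2^{t-1}\bigr)h_{i-1}(t-1),
\]
and $z_{i-1}-t+1=z_i-t$ yields the boxed formula. For $t=0$ there is no $t-1$ term and $2^0=1$, so $h_i(0)=h_{i-1}(0)$.

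The support claim $0\le t\le\lfloor z_i/2\rfloor$ also goes by induction. The bound $t\ge0$ holds because $\rank U\ge\rank U^TU=s_i$. For the upper bound we use \Cref{lem:intersection} applied to $U$ itself: $\dim(\Col U\cap\Null U^T)=t$. This space is contained in $\Null U^T$, which has dimension $i-s_i-t=z_i-t$, so $t\le z_i-t$, i.e.\ $2t\le z_i$. This direct argument avoids tracking the coefficients $2^{z_{i-1}-t'}-2^{t'}$ through the induction, although one could also check that they vanish exactly at the boundary. The count of states is then consistent with the restricted ranges stated in the boxed formulas.

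The final formula $\nu(D_n)=\sum_t h_n(t)$ follows because every root has some rank $s_n+t$ with $t$ in the stated range. The main obstacle is bookkeeping: matching $z_{i-1}$ versus $z_i$ in the exponents in each case, and making sure no extension is double counted. Disjointness holds because the decomposition of $U$ into $(V,w,c)$ is unique, so each root of $D_i$ arises from exactly one root $V$ of $D_{i-1}$ and one choice of $w$. The base case is $h_0$ together with the convention $\nu_q(\varnothing)=1$.
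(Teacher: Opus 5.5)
Your proof is correct. The recurrence derivation matches the paper's: induction on the last column, with \Cref{lem:extension} rewritten in excess-rank terms via $m-r=z_{i-1}-t'$ and $r-s=t'$. The paper treats $h_i(0)=h_{i-1}(0)$ by a separate argument that an old root cannot have rank $s_i-1$. That is the same fact you use implicitly, namely $t'\ge0$ from $\rank V\ge\rank V^TV$.

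Where you genuinely differ is the support bound. The paper proves $h_i(t)=0$ for $t>\lfloor z_i/2\rfloor$ inductively from the recurrence itself, with a parity split on $z_i$. When $z_i=2k+1$ and $t=k+1$ it relies on the coefficient $2^{z_i-t}-2^{t-1}$ vanishing exactly; otherwise it appeals to the inductive vanishing of $h_{i-1}$. You instead apply \Cref{lem:intersection} directly to the root $U$: the $t$-dimensional space $\Col U\cap\Null U^T$ lies inside the $(z_i-t)$-dimensional space $\Null U^T$. This argument needs no induction, takes one line, and shows the bound is a property of each individual root. It parallels the isotropy bound $2\rank Y\le j$ that the paper itself uses for $C_j(t)$.

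The paper's route has a complementary merit. It exhibits that the transition rule alone never moves mass past $\lfloor z_i/2\rfloor$, because the boundary coefficient vanishes; this is what the assertion in the appendix code relies on. Under your route that consistency follows automatically, since the recurrence is an exact count and the support is known, but it is not displayed.

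One wording fix: you say the support claim ``also goes by induction'' and then give an argument that needs no induction. The direct version is the better statement, so say that.
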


\begin{proof}
We prove both the support assertion and the recurrences by induction on
$i$, using Lemma $\ref{lem:extension}$.  At $i=0$ we have $z_0=0$ and, by
definition, $h_0(0)=1$ while every other state is zero.  Thus the
assertion holds at the initial stage.

Fix $i\ge1$ and assume all assertions at stage $i-1$.  Let
$V\in\T_{i-1}(\F_2)$ be an old root, that is,
\[
V^TV=D_{i-1},
\qquad
\rank V=r.
\]
We wish to determine all possible extensions
\[
U=\begin{pmatrix} V & w \\ 0 & c \end{pmatrix}
\]
such that
\[
U^TU=D_i.
\]
By the block computation in the one-step extension setup, this is
equivalent to
\[
V^Tw=0,\qquad w^Tw+c=d_i.
\]
According to Lemma $\ref{lem:extension}$, for a fixed old root $V$, the
number of possible vectors $w\in\Null V^T$ is $2^{(i-1)-r}$, and
each $w$ uniquely determines $c$. Thus the total number of
extensions of $V$ is $2^{(i-1)-r}$.

We now separate the two possible values of $d_i$.

\paragraph{Case 1: $d_i=1$.}
Then $s_i=s_{i-1}+1$ and $z_i=z_{i-1}$.  Let
$0\le t\le\lfloor z_i/2\rfloor$, and consider an old root $V$
of rank
\[
r=s_{i-1}+t=(s_i-1)+t.
\]
Such a root contributes to $h_{i-1}(t)$, since the excess rank of
$V$ relative to $D_{i-1}$ is $t$. By Lemma $\ref{lem:extension}(iii)$,
when $d_i=1$, every extension of $V$ increases the rank by exactly
one. Therefore the extended matrix $U$ has rank
\[
r+1=(s_i-1+t)+1=s_i+t.
\]
Thus the excess rank of $U$ relative to $D_i$ is again $t$. The
number of extensions of each such $V$ is
\[
2^{(i-1)-r}
=2^{(i-1)-(s_i-1+t)}
=2^{i-s_i-t}
=2^{z_i-t},
\]
since $z_i=i-s_i$. Hence all roots counted by $h_{i-1}(t)$ extend
to roots counted by $h_i(t)$, and the number of extensions is the
same for each old root. Therefore
\[
h_i(t)=2^{z_i-t}h_{i-1}(t).
\]
If $t>\lfloor z_i/2\rfloor$, then
$t>\lfloor z_{i-1}/2\rfloor$, so the induction hypothesis gives
$h_{i-1}(t)=0$.  Every extension preserves the excess rank in this
case, and therefore $h_i(t)=0$ as well.  This proves both the formula
and the support assertion when $d_i=1$.

\paragraph{Case 2: $d_i=0$.}
Then $s_i=s_{i-1}$ and $z_i=z_{i-1}+1$.  We first treat $t=0$.
Let $V\in\T_{i-1}(\F_2)$ be an old root with $V^TV=D_{i-1}$, and consider an extension
\[
U=\begin{pmatrix} V & w\\ 0 & c\end{pmatrix}.
\]
Since $U$ is obtained from $V$ by adding a single column, its rank can increase by at most one:
\[
\rank V \le \rank U \le \rank V+1.
\]
Thus if $\rank U=s_i$, then $\rank V$ must be either $s_i$ or $s_i-1$. We claim that $\rank V=s_i-1$ is impossible. Indeed, $V^TV=D_{i-1}$ implies
\[
\rank(V^TV)=\rank D_{i-1}=s_{i-1}=s_i.
\]
But for any matrix $V$ over any field, $\rank(V^TV)\le \rank V$. Hence
\[
s_i=\rank(V^TV)\le \rank V.
\]
Therefore $\rank V$ cannot be $s_i-1$, since that would give $s_i\le s_i-1$, a contradiction. Consequently the only possible old rank is $\rank V=s_i$. For such an old root $V$, Lemma~\ref{lem:extension}(ii) states that when $d=0$, the number of rank-preserving extensions is
\[
2^{r-s_{i-1}},
\]
where $r=\rank V$. Substituting $r=s_i$ and $s_{i-1}=s_i$ gives
\[
2^{s_i-s_i}=2^0=1.
\]
Thus each old root of rank $s_i$ has exactly one extension that preserves the rank $s_i$. Hence
\[
h_i(0)=h_{i-1}(0).
\]

Now let $1\le t\le\lfloor z_i/2\rfloor$.  We must count how roots of rank $s_i+t$ arise
from old roots of rank either $s_i+t$ or $s_i+t-1$.

First consider an old root $V$ of rank
\[
r=s_i+t.
\]
Such a root contributes to $h_{i-1}(t)$. By Lemma $\ref{lem:extension}(ii)$,
when $d_i=0$, the number of extensions that preserve the rank $r$
is
\[
2^{r-s_{i-1}}=2^{(s_i+t)-s_i}=2^t.
\]
Each such extension yields a new root $U$ of rank $s_i+t$, hence
still with excess rank $t$. Thus the contribution from these roots is
\[
2^t\,h_{i-1}(t).
\]

Now consider an old root $V$ of rank
\[
r-1=s_i+t-1.
\]
Such a root contributes to $h_{i-1}(t-1)$. The total number of
extensions of such a $V$ is
\[
2^{(i-1)-(r-1)}
=2^{(i-1)-(s_i+t-1)}
=2^{i-s_i-t}
=2^{z_i-t}.
\]
Among these extensions, Lemma $\ref{lem:extension}(ii)$ tells us that
\[
2^{(r-1)-s_{i-1}}
=2^{(s_i+t-1)-s_i}
=2^{t-1}
\]
of them preserve the old rank $r-1$, and therefore do not contribute
to rank $s_i+t$. The remaining
\[
2^{z_i-t}-2^{t-1}
\]
extensions increase the rank by one, producing a new root of rank
$s_i+t$. Therefore the contribution from old roots of excess rank
$t-1$ is
\[
\bigl(2^{z_i-t}-2^{t-1}\bigr)h_{i-1}(t-1).
\]
Adding the two contributions gives
\[
h_i(t)
=2^t h_{i-1}(t)
+\bigl(2^{z_i-t}-2^{t-1}\bigr)h_{i-1}(t-1).
\]
For the stated range, $2t\le z_i$, and hence
$z_i-t\ge t$.  It follows that
\[
 2^{z_i-t}-2^{t-1}\ge 2^t-2^{t-1}=2^{t-1}>0,
\]
so every displayed coefficient is a nonnegative integer.

It remains to verify the support assertion. Suppose $t>\lfloor z_i/2\rfloor$, and consider the recurrence
\[
h_i(t)=2^t h_{i-1}(t)+\bigl(2^{z_i-t}-2^{t-1}\bigr)h_{i-1}(t-1).
\]
We show that both summands vanish.

First, the rank-preserving contribution vanishes. Since $z_i=z_{i-1}+1$, we have
\[
\left\lfloor \frac{z_i}{2}\right\rfloor
\ge
\left\lfloor \frac{z_i-1}{2}\right\rfloor
=
\left\lfloor \frac{z_{i-1}}{2}\right\rfloor .
\]
Thus $t>\lfloor z_i/2\rfloor$ implies $t>\lfloor z_{i-1}/2\rfloor$, and by the induction hypothesis $h_{i-1}(t)=0$. Hence $2^t h_{i-1}(t)=0$.

For the rank-increasing contribution, we distinguish two cases according to the parity of $z_i$.

If $z_i=2k$ is even, then $t>\lfloor z_i/2\rfloor=k$, so $t\ge k+1$. Consequently,
\[
t-1\ge k > k-1 = \left\lfloor \frac{2k-1}{2}\right\rfloor
= \left\lfloor \frac{z_{i-1}}{2}\right\rfloor .
\]
Therefore $h_{i-1}(t-1)=0$ by induction, and the second summand vanishes.

If $z_i=2k+1$ is odd, then $t>\lfloor z_i/2\rfloor=k$, so $t\ge k+1$. If $t=k+1$, then the coefficient of $h_{i-1}(t-1)$ is
\[
2^{z_i-t}-2^{t-1}
=
2^{(2k+1)-(k+1)}-2^{k}
=
2^k-2^k
=
0.
\]
Thus the second summand vanishes regardless of $h_{i-1}(t-1)$. If $t\ge k+2$, then
\[
t-1\ge k+1 > k = \left\lfloor \frac{2k}{2}\right\rfloor
= \left\lfloor \frac{z_{i-1}}{2}\right\rfloor ,
\]
so $h_{i-1}(t-1)=0$ by induction. Hence the second summand also vanishes.

In all cases both summands are zero, so $h_i(t)=0$ whenever $t>\lfloor z_i/2\rfloor$. 

Finally, every upper triangular root of $D_n$ is counted exactly once
by the sum of the admissible $h_n(t)$. Hence
\[
\nu(D_n)=\sum_{t=0}^{\lfloor z_n/2\rfloor}h_n(t).
\]
This completes the proof.
\end{proof}
\begin{remark}
The state range has at most $1+\lfloor i/2\rfloor$ entries at stage
$i$.  Consequently the complete rank distribution is obtained using
$O(n^2)$ additions and multiplications of integers.  This is an
arithmetic-operation count; since the integers themselves grow with
$n$, it is not a bit-complexity estimate.  The executable recurrence
is recorded in \Cref{app:diagonal-code}.
\end{remark}

\subsection[Relation to the rank-refined zero-fiber numbers]
{Relation to the Numbers $C_k(r)$}

\begin{corollary}
\label{cor:lpn}
If
\[
D_n=\diag(\underbrace{1,\ldots,1}_{s},
\underbrace{0,\ldots,0}_{n-s}),
\]
then
\[
h_n(t)=|C_{n-s}(t)|,
\qquad
\nu(D_n)=|C_{n-s}|.
\]
\end{corollary}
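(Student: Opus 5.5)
We run the recursion of \Cref{thm:diagonal-recurrence} along the diagonal $(1,\ldots,1,0,\ldots,0)$ and compare the resulting sequence with the Cooper--Whitlatch recurrence for $f_{k,r}=|C_k(r)|$ recalled in the introduction. Set $k=n-s$. The proof has two phases: the first $s$ steps, where every $d_i=1$, and the last $k$ steps, where every $d_i=0$.

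\textbf{Phase one ($i\le s$).} Here $z_i=0$, so the support assertion of \Cref{thm:diagonal-recurrence} leaves only the state $t=0$. The $d_i=1$ rule gives
\[
h_i(0)=2^{z_i-0}h_{i-1}(0)=h_{i-1}(0),
\]
hence $h_s(0)=1$ and $h_s(t)=0$ for $t\neq0$. This matches $f_{0,0}=1$ together with the empty-matrix convention.

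\textbf{Phase two ($i=s+j$ with $1\le j\le k$).} Now $d_i=0$ and $z_i=j$. Put $g_j(t)=h_{s+j}(t)$. The $d_i=0$ rules read
\[
g_j(0)=g_{j-1}(0),\qquad
g_j(t)=2^t g_{j-1}(t)+\bigl(2^{j-t}-2^{t-1}\bigr)g_{j-1}(t-1)
\]
for $1\le t\le\lfloor j/2\rfloor$, and $g_j(t)=0$ outside this range. These are exactly the recurrence and initial data of $f_{j,t}$:
\begin{itemize}[label=--]
\item $f_{j,0}=1$;
\item $f_{j,t}=2^t f_{j-1,t}+(2^{j-t}-2^{t-1})f_{j-1,t-1}$ for $1\le t\le\lfloor j/2\rfloor$;
\item $f_{j,t}=0$ outside the feasible range.
\end{itemize}
A straightforward induction on $j$ therefore gives $g_j(t)=f_{j,t}$ for all $t$. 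Taking $j=k$ yields $h_n(t)=f_{k,t}=|C_{n-s}(t)|$, using the theorem of Cooper--Whitlatch that $|C_k(r)|=f_{k,r}$.

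\textbf{Alternative for phase two.} To avoid citing that theorem, note that the $s=0$ case of \Cref{thm:diagonal-recurrence} is itself the zero-target case. With $D_k=0$ we have $h_k(t)=|C_k(t)|$ by definition. The same recursion with initial value $1$ then produces the same sequence, so $h_n(t)=h^{(0)}_k(t)=|C_k(t)|$ directly, where $h^{(0)}$ denotes the $s=0$ recursion.

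\textbf{Total count.} Summing over $t$ and applying the last display of \Cref{thm:diagonal-recurrence} gives $\nu(D_n)=\sum_t |C_{n-s}(t)|=|C_{n-s}|$.

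\textbf{Main obstacle.} The only delicate point is phase one. Its multiplier $2^{z_i-t}$ must equal $1$ there, which holds because $z_i=0$ precisely because the ones precede the zeros. The phase-two recursion is then indexed by $z_i=j$ and does not depend on $s$. Beyond this, the range and boundary conventions need a careful check: $f$ is nonzero only for $t\le\lfloor j/2\rfloor$, and the coefficient vanishes at the odd boundary, exactly as shown in the proof of \Cref{thm:diagonal-recurrence}.
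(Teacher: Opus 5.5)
Your proof is correct. Phase one is the same as in the paper. The difference lies in how you identify the phase-two sequence $g_j(t)=h_{s+j}(t)$ with $|C_j(t)|$.

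The paper re-derives the recurrence for $|C_j(t)|$ from scratch. It peels off the last column of $Y\in C_j(t)$ and uses the total isotropy of $W=\Col Y_0$ to count $2^t$ rank-preserving extensions and $2^{j-t}-2^{t-1}$ rank-increasing ones. It then proves the support bound $2\rank Y\le j$ separately.

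Your primary route instead imports the Cooper--Whitlatch recurrence for $f_{k,r}=|C_k(r)|$. That is legitimate, since the paper records it in the introduction, but it makes the corollary rest on an external result.

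Your alternative route is the cleanest of the three. Apply \Cref{thm:diagonal-recurrence} to the all-zero diagonal of length $k$. There $s_i=0$, $z_i=i$, and $h_k(t)=|C_k(t)|$ holds by definition. This sequence has the same initial state and the same transitions, indexed by $z$, as $g_j$, so the two agree by induction on $j$.

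This shows that the paper's direct derivation is just the $s=0$ instance of the theorem already proved. The corollary then reduces to the observation that the $d_i=0$ transitions depend only on $z_i$ and $t$, not on $s$. The paper's version buys only an explicit, self-contained isotropy argument for $C_j(t)$, which it reuses later for the zero fibers. Logically, that argument is redundant once \Cref{thm:diagonal-recurrence} is available.
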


\begin{proof}
For a nonnegative integer $k$ and $t\ge0$, let $C_k(t)$ denote
the set of upper triangular matrices $U\in\T_k(\F_2)$ such that
\[
U^TU=0_k
\qquad\text{and}\qquad
\rank U=t.
\]
Thus $C_k=\bigcup_{t\ge0} C_k(t)$ and
\[
|C_k|=\sum_{t\ge0}|C_k(t)|.
\]

We prove the corollary by induction on the number of trailing zero
diagonal entries.

Starting from the empty matrix, we have $h_0(0)=1$. For $i=1,\dots,s$,
all diagonal entries added so far are $1$, hence $s_i=i$ and therefore
\[
z_i=i-s_i=0
\]
for every $1\le i\le s$.  The support assertion in
\Cref{thm:diagonal-recurrence} therefore leaves only the admissible
state $t=0$.  Since $d_i=1$, its recurrence is
\[
h_i(0)=2^{0}h_{i-1}(0)=h_{i-1}(0).
\]
By induction,
\[
h_s(0)=h_0(0)=1,
\qquad\text{and}\qquad
h_s(t)=0\quad\text{for }t\ne0.
\]

For $j\ge0$, let
\[
g_j(t)=h_{s+j}(t).
\]
We claim that $g_j(t)=|C_j(t)|$. This will prove the corollary by
taking $j=n-s$.

When $j=0$, we have already shown that
\[
g_0(0)=h_s(0)=1,
\qquad
g_0(t)=0\quad(t\ne0).
\]
On the other hand, the only $0\times0$ upper triangular matrix is the
empty matrix, which has rank $0$ and satisfies the zero-matrix
condition vacuously. Hence
\[
|C_0(0)|=1,
\qquad
|C_0(t)|=0\quad(t\ne0).
\]
Thus $g_0(t)=|C_0(t)|$ for all $t$.

Now suppose $j\ge1$. Since the $j$-th trailing zero is added at
stage $i=s+j$, we have $d_i=0$. Moreover,
\[
s_i=s_{i-1}=s,
\qquad
z_i=i-s_i=(s+j)-s=j.
\]
By \Cref{thm:diagonal-recurrence},
\[
 g_j(0)=g_{j-1}(0),
\]
and, for $1\le t\le\lfloor j/2\rfloor$,
\[
g_j(t)
=2^t g_{j-1}(t)
+\bigl(2^{j-t}-2^{t-1}\bigr)g_{j-1}(t-1),
\]
while $g_j(t)=0$ for $t>\lfloor j/2\rfloor$.

We must check that the numbers $|C_j(t)|$ satisfy exactly the same
recurrence. We derive it directly from the one-step extension.

Take $Y\in C_j(t)$ and write
\[
Y=\begin{pmatrix} Y_0 & w\\ 0 & c \end{pmatrix},
\qquad Y_0\in\T_{j-1}(\F_2),\quad w\in\F_2^{j-1},\quad c\in\F_2.
\]
The condition $Y^TY=0$ is equivalent to
\[
Y_0^TY_0=0,\qquad Y_0^Tw=0,\qquad c=w^Tw.
\]
Let $W=\Col Y_0$. Since $Y_0^TY_0=0$, the column space $W$ is
totally isotropic: $W\subseteq W^\perp$, where $W^\perp$ is the
orthogonal complement under the standard dot product. Thus
\[
\dim W^\perp=(j-1)-\rank Y_0.
\]

If $\rank Y_0=t$, then to preserve rank $t$ we need $w\in W$ and
$c=0$. For every $w\in W$, the isotropic condition gives
$w^Tw=0$, so $c=0$ is forced. Since $|W|=2^t$, there are exactly
$2^t$ rank-preserving extensions of $Y_0$.

For $t=0$, the preceding paragraph gives exactly one extension of the
unique rank-zero old root, and there is no old root of rank $-1$.
Consequently
\[
 |C_j(0)|=|C_{j-1}(0)|.
\]

Now let $t\ge1$.  If $\rank Y_0=t-1$, then
$\dim W^\perp=(j-1)-(t-1)=j-t$, so there
are $2^{j-t}$ possible choices of $w$ satisfying $Y_0^Tw=0$.
Among these, exactly $|W|=2^{t-1}$ choices have $w\in W$, which
preserve the rank $t-1$; the remaining $2^{j-t}-2^{t-1}$ choices
have $w\notin W$, and for each such $w$ the resulting matrix $Y$
has rank $t$. Hence
\[
|C_j(t)|
=2^t |C_{j-1}(t)|
+\bigl(2^{j-t}-2^{t-1}\bigr)|C_{j-1}(t-1)|,
\]

Finally, $Y^TY=0$ implies that $\Col Y$ is a totally isotropic
subspace of the nondegenerate $j$-dimensional dot-product space.
Because $\Col Y\subseteq(\Col Y)^\perp$, taking dimensions gives
$2\rank Y\le j$.  Hence $C_j(t)=\varnothing$ for
$t>\lfloor j/2\rfloor$.  The sequences $g_j(t)$ and $|C_j(t)|$
therefore have the same support as well as the same transition formulas.

Thus $g_j(t)$ and $|C_j(t)|$ satisfy the same recurrence and the
same initial conditions. By induction on $j$, we conclude that
\[
g_j(t)=|C_j(t)|
\]
for all $j\ge0$ and all $t\ge0$.

Taking $j=n-s$, we obtain
\[
h_n(t)=h_{s+(n-s)}(t)=g_{n-s}(t)=|C_{n-s}(t)|.
\]
Finally, summing over all $t$ gives
\[
\nu(D_n)
=\sum_{t\ge0}h_n(t)
=\sum_{t\ge0}|C_{n-s}(t)|
=|C_{n-s}|.
\]
This completes the proof.
\end{proof}
\begin{corollary}
\label{cor:not-nullity}
The Cholesky root count of a diagonal matrix is not determined solely
by its rank or nullity.
\end{corollary}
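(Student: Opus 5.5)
The plan is to exhibit an explicit counterexample: a small size $n$ and two (or more) binary diagonal matrices of the same size and the same rank, hence the same nullity, whose Cholesky root counts differ. Because rank and nullity are then equal while $\nu$ is not, $\nu$ of a diagonal matrix cannot be a function of rank or nullity alone. I will compute the counts with the state recursion of \Cref{thm:diagonal-recurrence} rather than by brute force. The main point to get right is choosing a pair whose diagonal entries are permuted, so that only the \emph{order} of the entries changes.

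I take $n=3$, rank $1$, nullity $2$, and compare $\diag(1,0,0)$, $\diag(0,1,0)$ and $\diag(0,0,1)$.

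For $\diag(1,0,0)$, \Cref{cor:lpn} gives $\nu=|C_2|$. The zero-fiber recursion yields $g_1(0)=1$, then $g_2(0)=1$ and $g_2(1)=(2^{1}-2^{0})\cdot 1=1$. So $\nu(\diag(1,0,0))=2$.

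For $\diag(0,0,1)$, I run \Cref{thm:diagonal-recurrence} step by step:
\begin{itemize}
\item Stage $1$ has $d_1=0$ and $z_1=1$, giving $h_1(0)=1$.
\item Stage $2$ has $d_2=0$ and $z_2=2$, giving $h_2(0)=1$ and $h_2(1)=(2^{1}-2^{0})\cdot 1=1$.
\item Stage $3$ has $d_3=1$ and $z_3=2$, giving $h_3(0)=2^{2}\cdot 1=4$ and $h_3(1)=2^{1}\cdot 1=2$.
\end{itemize}
Hence $\nu(\diag(0,0,1))=6$.

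For completeness, the middle case $\diag(0,1,0)$ runs as follows:
\begin{itemize}
\item Stage $1$ gives $h_1(0)=1$.
\item Stage $2$ has $d_2=1$ and $z_2=1$, giving $h_2(0)=2^{1}\cdot 1=2$.
\item Stage $3$ has $d_3=0$ and $z_3=2$, giving $h_3(0)=2$ and $h_3(1)=(2^{1}-2^{0})\cdot 2=2$.
\end{itemize}
This gives $\nu(\diag(0,1,0))=4$.

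The three matrices all have rank $1$ and nullity $2$, yet their counts are $2$, $4$ and $6$. This proves the corollary and also illustrates the order dependence announced in the introduction. As a sanity check, I would confirm one of the values directly from \Cref{cor:binary-recursion}. For example, $\diag(1,0,0)$ has $a=1$ and $b=0$, so its count reduces to $\nu(0_2)$, which is $2$: the roots are $0$ and the matrix with all four upper entries equal to $1$.
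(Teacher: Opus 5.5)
Your proof is correct, and it uses the same strategy as the paper: fix the size and the rank, permute the diagonal entries, and observe that the root count changes. I checked your three values independently with \Cref{cor:binary-recursion}. For instance,
$\nu(\diag(0,0,1))=\nu(\diag(0,1))+\nu(I_2)+\nu(0_2)+\nu\begin{psmallmatrix}1&1\\1&0\end{psmallmatrix}=2+1+2+1=6$.
Similarly, $\nu(\diag(0,1,0))=1+2+1+0=4$, where the last term vanishes because $\begin{psmallmatrix}0&1\\1&1\end{psmallmatrix}$ has $a=0$, $b\neq0$.

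The differences lie in the example and in how the counts are obtained. The paper uses the smallest possible pair, $\diag(1,0)$ and $\diag(0,1)$. It solves the $2\times2$ systems by hand: $u=1$, $ux=0$, $x+c=0$ for the first, and $u=0$, $x+c=1$ for the second. This gives $\nu=1$ and $\nu=2$, completely elementarily and without using the diagonal recursion. Your version relies on \Cref{thm:diagonal-recurrence} and \Cref{cor:lpn}, which is legitimate since both are proved earlier. In exchange it shows slightly more: a single rank/nullity class at $n=3$ already carries three different counts, $2$, $4$ and $6$.

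One small slip in your sanity check: the nonzero root of $0_2$ is $\begin{psmallmatrix}0&1\\0&1\end{psmallmatrix}$, not a matrix with all upper entries equal to $1$. Indeed, over $\F_2$,
$\begin{psmallmatrix}1&1\\0&1\end{psmallmatrix}^T\begin{psmallmatrix}1&1\\0&1\end{psmallmatrix}=\begin{psmallmatrix}1&1\\1&0\end{psmallmatrix}\neq0$.
So the two roots of $\diag(1,0,0)$ are $\diag(1,0,0)$ itself and the matrix with ones exactly in positions $(1,1)$, $(2,3)$, $(3,3)$. The count $2$, and hence your argument, is unaffected.
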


\begin{proof}
We exhibit two diagonal matrices with the same rank and the same
nullity but different numbers of upper triangular Cholesky roots.

Consider the two $2\times 2$ diagonal matrices over $\F_2$
\[
D=\begin{pmatrix}1&0\\0&0\end{pmatrix},
\qquad
E=\begin{pmatrix}0&0\\0&1\end{pmatrix}.
\]
Both have exactly one nonzero diagonal entry, so
\[
\rank D=\rank E=1.
\]
Since both matrices are $2\times 2$, the rank--nullity theorem gives
\[
\dim\Null D=2-\rank D=1,
\qquad
\dim\Null E=2-\rank E=1.
\]
Thus $D$ and $E$ agree in both rank and nullity.

Now compute their Cholesky root counts explicitly. A general upper
triangular $2\times 2$ matrix over $\F_2$ has the form
\[
U=\begin{pmatrix}u&x\\0&c\end{pmatrix},
\qquad u,x,c\in\F_2.
\]
Over $\F_2$ we have $u^2=u$ and $c^2=c$, so
\[
U^TU
=
\begin{pmatrix}
u^2 & ux\\
ux & x^2+c^2
\end{pmatrix}
=
\begin{pmatrix}
u & ux\\
ux & x+c
\end{pmatrix}.
\]

For the roots of $D$, we need $U^TU=D$, that is,
\[
\begin{pmatrix}
u & ux\\
ux & x+c
\end{pmatrix}
=
\begin{pmatrix}
1&0\\
0&0
\end{pmatrix}.
\]
Comparing the $(1,1)$-entries gives $u=1$. Comparing the
$(1,2)$-entries gives $ux=0$, hence $x=0$ because $u=1$.
Finally, comparing the $(2,2)$-entries gives $x+c=0$, and with
$x=0$ this yields $c=0$. Therefore the only solution is
\[
U=\begin{pmatrix}1&0\\0&0\end{pmatrix}.
\]
Hence
\[
\nu(D)=1.
\]

For the roots of $E$, we need $U^TU=E$, that is,
\[
\begin{pmatrix}
u & ux\\
ux & x+c
\end{pmatrix}
=
\begin{pmatrix}
0&0\\
0&1
\end{pmatrix}.
\]
Comparing the $(1,1)$-entries gives $u=0$. The $(1,2)$-entry
condition $ux=0$ is then automatically satisfied. Comparing the
$(2,2)$-entries gives
\[
x+c=1.
\]
Over $\F_2$, this equation has exactly two solutions:
\[
x=0,\ c=1
\qquad\text{and}\qquad
x=1,\ c=0.
\]
The corresponding matrices are
\[
\begin{pmatrix}0&0\\0&1\end{pmatrix}
\qquad\text{and}\qquad
\begin{pmatrix}0&1\\0&0\end{pmatrix}.
\]
Therefore
\[
\nu(E)=2.
\]

Since $D$ and $E$ have the same rank and the same nullity but
different Cholesky root counts, we conclude that the Cholesky root
count of a diagonal matrix cannot be determined solely by its rank or
nullity.
\end{proof}
\begin{remark}
Thus the natural answer to the question of whether the count can be
expressed in terms of the numbers $|C_k|$ is as follows.
\begin{itemize}
\item In the leading-ones--trailing-zeros case, the answer is exactly
      the single total $|C_{n-r}|$.
\item For a general diagonal ordering, a formula depending only on the
      total or only on nullity does not hold.
\item The complete answer requires the rank-refined transition structure
      obeyed by the numbers $|C_k(t)|$. A zero diagonal entry produces
      the same transition as in $C_k(t)$, while a one diagonal entry
      multiplies each excess-rank layer by $2^{z_i-t}$.
\end{itemize}
\end{remark}

\subsection{Low-Order Examples}

\begin{example}
For $D=\diag(0,1)$, the first step adds a zero and gives
$h_1(0)=1$. The second step adds a one, and since $z_2=1$,
\[
h_2(0)=2^{1}h_1(0)=2.
\]
Therefore
\[
\nu(\diag(0,1))=2.
\]
The two roots are
\[
\begin{pmatrix}0&0\\0&1\end{pmatrix},
\qquad
\begin{pmatrix}0&1\\0&0\end{pmatrix}.
\]
\end{example}

\begin{example}
For $4\times4$ diagonal matrices, represent the diagonal entries by the
binary string $d_1d_2d_3d_4$. The recursion gives the following table,
where a star means that the last entry may be either $0$ or $1$.
\[
\begin{array}{c|cccccccc}
\toprule
(d_1d_2d_3d_4)
&000*&001*&010*&011*&100*&101*&110*&111*\\
\midrule
\nu(D_4)
&28&20&12&8&6&4&2&1\\
\bottomrule
\end{array}
\]
In particular, the last diagonal entry does not change the total number
of roots, but it does change the rank distribution of the roots. This
is because, in the last step, the total number of extensions of each
old root is independent of $d_4$, while $d_4$ determines which
extensions preserve the rank.
\end{example}

The corresponding state-transition implementation is given in
\Cref{app:diagonal-code}.

\section[An Explicit Rank-Preserving Bijection]
{An Explicit Rank-Preserving Bijection
Between $B_n(r)$ and $C_n(r)$}
\label{sec:bijection}

\subsection{The Two Nested Spaces Behind the Same Recursion}

Take
\[
X=
\begin{pmatrix}
X_0&v\\
0&0
\end{pmatrix}\in B_n.
\]
The condition $X^2=0$ is equivalent to
\[
X_0^2=0,\qquad v\in\Null X_0,
\]
and also
\[
\Col X_0\subseteq\Null X_0.
\]

On the other hand, take
\[
Y=
\begin{pmatrix}
Y_0&w\\
0&c
\end{pmatrix}\in C_n.
\]
The condition $Y^TY=0$ is equivalent to
\[
Y_0^TY_0=0,\qquad
w\in\Null Y_0^T,\qquad
c=w^Tw,
\]
while
\[
\Col Y_0\subseteq\Null Y_0^T.
\]

If $\rank X_0=\rank Y_0=r$, then the two pairs of nested spaces have
exactly the same dimensions:
\[
\begin{array}{c|cc}
&\text{inner dimension}&\text{outer dimension}\\
\hline
\Col X_0\subseteq\Null X_0&r&(n-1)-r\\
\Col Y_0\subseteq\Null Y_0^T&r&(n-1)-r.
\end{array}
\]

\subsection{Canonical Adapted Bases}

\begin{definition}
\label{def:adapted-basis}
For a nested pair of subspaces
\[
S\subseteq T\subseteq\F_2^m,
\qquad \dim S=a,\quad \dim T=b,
\]
define its canonical adapted basis as follows.
\begin{enumerate}[label=\textup{(\arabic*)}]
\item First take the unique reduced row echelon basis
      $(s_1,\ldots,s_a)$ of $S$.
\item Then scan the vectors of $T$ in a fixed lexicographic order;
      whenever a vector is not already in the span of the previously
      chosen vectors, append it, until
      \[
      (s_1,\ldots,s_a,t_{a+1},\ldots,t_b)
      \]
      is a basis of $T$.
\end{enumerate}
\end{definition}

This choice is determined entirely by the standard coordinate order and
therefore yields an executable canonical algorithm.  The following lemma
does not assert uniqueness among all isomorphisms carrying $S$ onto
$S'$; rather, it singles out one isomorphism uniquely by the prescribed
adapted bases.

\begin{lemma}
\label{lem:nested-iso}
Suppose
\[
S\subseteq T,\qquad S'\subseteq T',
\]
and
\[
\dim S=\dim S',\qquad \dim T=\dim T'.
\]
Then mapping corresponding vectors of the two canonical adapted bases
to each other determines a linear isomorphism
\[
\psi:T\longrightarrow T'
\]
such that $\psi(S)=S'$.  It is the unique linear map sending every
vector of the canonical adapted basis of $T$ to the corresponding
vector of the canonical adapted basis of $T'$.
\end{lemma}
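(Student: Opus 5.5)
The plan is to first confirm that the canonical adapted basis of \Cref{def:adapted-basis} is well defined, and then to define $\psi$ by linear extension from that basis. I will write $(s_1,\ldots,s_a,t_{a+1},\ldots,t_b)$ for the canonical adapted basis of $S\subseteq T$ and $(s'_1,\ldots,s'_a,t'_{a+1},\ldots,t'_b)$ for that of $S'\subseteq T'$. Here $a=\dim S=\dim S'$ and $b=\dim T=\dim T'$.

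\emph{Well-definedness.} The reduced row echelon basis of $S$ is unique and consists of $a$ linearly independent vectors spanning $S$. The greedy scan in step (2) runs over the finite set $T$ in a fixed order. Each vector it appends lies outside the span of the vectors already chosen, so independence is preserved at every step. The scan cannot stop before the span equals $T$: if the span $W$ at the end of the scan were a proper subspace of $T$, some vector of $T\setminus W$ would have been appended when it was scanned. Since $S\subseteq T$, the final list is a basis of $T$ of length $b$ whose first $a$ vectors form a basis of $S$. The same argument applies to $S'\subseteq T'$.

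\emph{Construction of $\psi$.} Since the basis of $T$ is a basis, the universal property of bases gives a unique linear map $\psi:T\to T'$ with $\psi(s_i)=s'_i$ and $\psi(t_j)=t'_j$. This already gives the uniqueness clause, namely that $\psi$ is the unique linear map sending each vector of the canonical adapted basis of $T$ to the corresponding vector of the canonical adapted basis of $T'$.

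\emph{Isomorphism and $\psi(S)=S'$.} The map $\psi$ sends a basis to a basis, so it is bijective; its inverse is the linear map defined by the reverse correspondence. Moreover $\psi(S)=\psi(\Span(s_1,\ldots,s_a))=\Span(s'_1,\ldots,s'_a)=S'$.

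The only step that needs genuine care is the well-definedness of step (2), namely that the greedy scan terminates with a basis of $T$ extending the basis of $S$. Everything else is routine linear algebra.
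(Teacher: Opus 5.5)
Your proposal is correct and follows essentially the same route as the paper: define $\psi$ on the canonical adapted basis via the universal property of bases, observe that it sends a basis to a basis and is therefore an isomorphism, and read off $\psi(S)=S'$. The differences are minor. You additionally verify that the greedy scan of Definition~\ref{def:adapted-basis} yields a basis of $T$ extending the RREF basis of $S$, which the paper takes as part of the definition. You also obtain $\psi(S)=\Span(s'_1,\ldots,s'_a)=S'$ directly, whereas the paper proves the inclusion $\psi(S)\subseteq S'$ and then uses a dimension count.
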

\begin{proof}
Let
\[
\mathcal B=(s_1,\ldots,s_a,t_{a+1},\ldots,t_b)
\]
be the canonical adapted basis of $T$ associated with the nested pair
$S\subseteq T$, as constructed in Definition~\ref{def:adapted-basis}.
Here
\[
a=\dim S,\qquad b=\dim T,
\]
and the first $a$ vectors form the unique reduced row echelon basis
of $S$. Similarly, let
\[
\mathcal B'=(s'_1,\ldots,s'_a,t'_{a+1},\ldots,t'_b)
\]
be the canonical adapted basis of $T'$ associated with
$S'\subseteq T'$.

Because both pairs have the same dimensions, the two bases $\mathcal B$
and $\mathcal B'$ have exactly the same number of vectors. We define a
linear map $\psi:T\to T'$ by specifying its values on the basis
$\mathcal B$:
\[
\psi(s_i)=s'_i\quad (1\le i\le a),
\qquad
\psi(t_j)=t'_j\quad (a+1\le j\le b).
\]
By the universal property of bases, this prescription extends uniquely
to a well-defined linear map on all of $T$. Thus $\psi$ is uniquely
determined by the canonical adapted bases.

We now verify that $\psi$ is an isomorphism. Since $\mathcal B'$
is a basis of $T'$, the set
\[
\{\psi(s_1),\ldots,\psi(s_a),\psi(t_{a+1}),\ldots,\psi(t_b)\}
=
\mathcal B'
\]
is linearly independent and spans $T'$. Therefore $\psi$ maps a
basis of $T$ bijectively onto a basis of $T'$, which implies that
$\psi$ is both injective and surjective. Hence $\psi$ is a linear
isomorphism.

It remains to show that $\psi(S)=S'$. Since
$s_1,\ldots,s_a$ is a basis of $S$, every vector $x\in S$ can be
written uniquely as
\[
x=\sum_{i=1}^a \alpha_i s_i.
\]
Applying $\psi$ gives
\[
\psi(x)=\sum_{i=1}^a \alpha_i \psi(s_i)
=\sum_{i=1}^a \alpha_i s'_i.
\]
Because $s'_1,\ldots,s'_a$ is a basis of $S'$, it follows that
$\psi(x)\in S'$. Hence $\psi(S)\subseteq S'$. Conversely, since
$\psi$ is an isomorphism and both $S$ and $S'$ have dimension
$a$, the subspace $\psi(S)$ is an $a$-dimensional subspace of
$S'$. As $S'$ itself is $a$-dimensional, we must have
$\psi(S)=S'$.

Thus the canonical-basis prescription determines exactly one linear
isomorphism, and that isomorphism maps $S$ onto $S'$.
\end{proof}
\subsection{Recursive Construction of the Bijection}

\begin{theorem}[Explicit rank-preserving bijection]
\label{thm:explicit-bijection}
For every $n\ge0$ and every $0\le r\le n$, there is an explicit
invertible map
\[
\Phi_n:B_n(r)\longrightarrow C_n(r).
\]
\end{theorem}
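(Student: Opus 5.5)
We construct $\Phi_n$ by induction on $n$, peeling off the last column on both sides exactly as in the one-step decompositions of the preceding subsection. For $n=0$ both sets are the singleton consisting of the empty matrix, and $\Phi_0$ is the identity. For $n\ge1$, write $X\in B_n(r)$ as
\[
X=\begin{pmatrix}X_0&v\\0&0\end{pmatrix},\qquad X_0\in B_{n-1},\quad v\in\Null X_0.
\]
Put $r_0=\rank X_0\in\{r,r-1\}$, and set $Y_0=\Phi_{n-1}(X_0)\in C_{n-1}(r_0)$, which is available by the induction hypothesis. We then apply \Cref{lem:nested-iso} to the nested pairs $S=\Col X_0\subseteq T=\Null X_0$ and $S'=\Col Y_0\subseteq T'=\Null Y_0^T$. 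These pairs have dimensions $r_0$ and $(n-1)-r_0$ on both sides, as recorded in the dimension table. This yields an isomorphism $\psi:T\to T'$ with $\psi(S)=S'$. We define
\[
w=\psi(v),\qquad c=w^Tw,\qquad \Phi_n(X)=\begin{pmatrix}Y_0&w\\0&c\end{pmatrix}.
\]
Since $w\in\Null Y_0^T$ and $c=w^Tw$, the characterization of $C_n$ above gives $\Phi_n(X)^T\Phi_n(X)=0$.

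For rank preservation, observe that $\rank X=r_0$ exactly when $v\in\Col X_0$, and $\rank X=r_0+1$ otherwise, because the last row of $X$ is zero. On the other side, $\rank Y=r_0$ exactly when $w\in\Col Y_0$ and $c=0$. As in the proof of the recurrence for $|C_j(t)|$ in \Cref{cor:lpn}, total isotropy of $\Col Y_0$ gives $w^Tw=0$ whenever $w\in\Col Y_0$. Hence the rank of $Y$ stays $r_0$ if and only if $w\in S'$. Because $\psi(S)=S'$, the map $\psi$ sends $v\in S$ to $w\in S'$ and $v\notin S$ to $w\notin S'$, so $\rank\Phi_n(X)=\rank X$.

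For invertibility, we build $\Phi_n^{-1}$ by the same recursion. Given $Y=\begin{pmatrix}Y_0&w\\0&c\end{pmatrix}\in C_n(r)$, we set $X_0=\Phi_{n-1}^{-1}(Y_0)$ and recompute the canonical adapted bases of $\Col X_0\subseteq\Null X_0$ and $\Col Y_0\subseteq\Null Y_0^T$. This determines the same $\psi$, so we may set $v=\psi^{-1}(w)$. The entry $c$ carries no extra information, since it is forced to equal $w^Tw$. Checking that the two constructions compose to the identity in both orders is an induction. The only point needing care is that $\psi$ depends solely on $X_0$ and $Y_0$, and not on $v$ or $w$, which is exactly the canonicity furnished by \Cref{def:adapted-basis} and \Cref{lem:nested-iso}.

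The main obstacle is this canonicity and well-definedness. The isomorphism used in the forward and backward passes must coincide, so it must be a deterministic function of the pair $(X_0,Y_0)$. Because $\Phi_{n-1}$ is already a bijection, this pair is determined by either side. I expect this bookkeeping, together with the rank-case analysis, to be the only genuine content. As a consistency check, summing over the choices of $v$ reproduces the Cooper--Whitlatch recurrence: $2^{r}$ choices keep the rank at $r$, and $2^{n-r}-2^{r-1}$ choices raise it from $r-1$ to $r$.
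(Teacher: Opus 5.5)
Your proposal is correct and is essentially the paper's proof. Both argue by induction on $n$, peeling off the last column. The vector $v\in\Null X_0$ is transported to $w=\psi_{X_0,Y_0}(v)\in\Null Y_0^T$ by the canonical adapted-basis isomorphism of \Cref{lem:nested-iso}, and one sets $c=w^Tw$. Total isotropy of $\Col Y_0$ then gives $v\in\Col X_0\iff w\in\Col Y_0$, which yields rank preservation. The inverse uses $X_0=\Phi_{n-1}^{-1}(Y_0)$ together with the same $\psi$, which depends only on the pair $(X_0,Y_0)$.

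The only cosmetic difference is that the paper checks $n=1$ as a separate base case, and your recursion from $n=0$ already covers it.
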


\begin{proof}
We prove the theorem by induction on $n$.

For $n=0$, both $B_0(0)$ and $C_0(0)$ contain the empty matrix, while
$B_0(r)=C_0(r)=\varnothing$ for $r>0$; the assertion follows.  For
$n=1$, an upper triangular $1\times1$ matrix is a scalar.  The
condition $X^2=0$ forces $X=0$, and the condition $U^TU=0$ likewise
forces $U=0$.  Hence both $B_1(0)$ and $C_1(0)$ contain only the zero
matrix and both rank-$r$ sets are empty for $r>0$.

Assume that for every rank $0\le s\le n-1$ we have already constructed an explicit bijection
\[
\Phi_{n-1}:B_{n-1}(s)\longrightarrow C_{n-1}(s)
\]
that preserves rank. We will use the same symbol $\Phi_{n-1}$ to denote the induced bijection on the union $B_{n-1}\to C_{n-1}$.

Take an arbitrary $X\in B_n(r)$. Since $X$ is upper triangular and $X^2=0$, the bottom-right entry of $X$ must be $0$. Indeed, writing $X=(x_{ij})$, the bottom-right entry of $X^2$ is
\[
(X^2)_{nn}=x_{nn}^2,
\]
because all other terms in the last row or last column are zero. The condition $X^2=0$ gives $x_{nn}^2=0$, and since we are over $\F_2$, this forces $x_{nn}=0$. Therefore $X$ can be written uniquely in the block form
\[
X=
\begin{pmatrix}
X_0 & v\\
0 & 0
\end{pmatrix},
\]
where $X_0\in\T_{n-1}(\F_2)$ and $v\in\F_2^{n-1}$.

Because $X$ is upper triangular and $X^2=0$, expanding the block product gives
\[
X^2=
\begin{pmatrix}
X_0^2 & X_0v\\
0 & 0
\end{pmatrix}=0,
\]
so
\[
X_0^2=0
\qquad\text{and}\qquad
v\in\Null X_0.
\]
In particular, $X_0\in B_{n-1}$. Let
\[
Y_0=\Phi_{n-1}(X_0).
\]
By the induction hypothesis,
\[
\rank Y_0=\rank X_0
\]
and $Y_0\in C_{n-1}$, that is,
\[
Y_0^TY_0=0.
\]

Now consider the two pairs of nested subspaces
\[
\Col X_0\subseteq\Null X_0
\qquad\text{and}\qquad
\Col Y_0\subseteq\Null Y_0^T.
\]
The first inclusion follows from $X_0^2=0$, and the second from $Y_0^TY_0=0$. Since $\rank X_0=\rank Y_0$, both inner spaces have the same dimension, and both outer spaces have dimension $n-1-\rank X_0$. Thus Lemma $\ref{lem:nested-iso}$ applies, yielding a unique canonical linear isomorphism
\[
\psi_{X_0,Y_0}:\Null X_0\longrightarrow\Null Y_0^T
\]
such that
\[
\psi_{X_0,Y_0}(\Col X_0)=\Col Y_0.
\]

Define
\[
w=\psi_{X_0,Y_0}(v),\qquad c=w^Tw,
\]
and set
\[
\Phi_n(X)=
\begin{pmatrix}
Y_0 & w\\
0 & c
\end{pmatrix}.
\]

Since $v\in\Null X_0$, we have $w=\psi_{X_0,Y_0}(v)\in\Null Y_0^T$, so
\[
Y_0^Tw=0.
\]
Over $\mathbb F_2$, every element equals its own square, so $c^2=c$. Therefore
\[
w^Tw+c^2=w^Tw+c=w^Tw+w^Tw=0.
\]
Now compute
\[
\Phi_n(X)^T\Phi_n(X)
=
\begin{pmatrix}
Y_0^T & 0\\
w^T & c
\end{pmatrix}
\begin{pmatrix}
Y_0 & w\\
0 & c
\end{pmatrix}
=
\begin{pmatrix}
Y_0^TY_0 & Y_0^Tw\\
w^TY_0 & w^Tw+c^2
\end{pmatrix}.
\]
Using $Y_0^TY_0=0$, $Y_0^Tw=0$, and $w^Tw+c^2=0$, we obtain
\[
\Phi_n(X)^T\Phi_n(X)=0.
\]
Thus $\Phi_n(X)\in C_n$.

For the original matrix $X$, since its last column is
\[
\begin{pmatrix} v\\0 \end{pmatrix},
\]
we have
\[
\rank X=
\begin{cases}
\rank X_0,&v\in\Col X_0,\\
\rank X_0+1,&v\notin\Col X_0.
\end{cases}
\]
Indeed, the rank increases by one exactly when the last column is not in the column space of the previous columns.

For $Y=\Phi_n(X)$, the last column is
\[
\begin{pmatrix} w\\c \end{pmatrix}.
\]
If $w\in\Col Y_0$, then because $Y_0^TY_0=0$, the column space $\Col Y_0$ is totally isotropic, so $w^Tw=0$. Hence $c=w^Tw=0$, and the last column is
\[
\begin{pmatrix} w\\0 \end{pmatrix}\in\Col\begin{pmatrix}Y_0\\0\end{pmatrix}.
\]
Thus the rank of $Y$ equals $\rank Y_0$.

If $w\notin\Col Y_0$, then the last column of $Y$ is not in the span of its previous columns, regardless of the value of $c$. Hence
\[
\rank Y=\rank Y_0+1.
\]

Since $\psi_{X_0,Y_0}$ maps $\Col X_0$ exactly onto $\Col Y_0$, we have
\[
v\in\Col X_0\iff w\in\Col Y_0.
\]
Combining this with the induction hypothesis $\rank Y_0=\rank X_0$, we conclude
\[
\rank\Phi_n(X)=\rank X.
\]
Therefore $\Phi_n$ restricts to a map $B_n(r)\to C_n(r)$ for every $r$.

We now define the inverse map. Given
\[
Y=
\begin{pmatrix}
Y_0 & w\\
0 & c
\end{pmatrix}\in C_n(r),
\]
the condition $Y^TY=0$ implies
\[
Y_0^TY_0=0,\qquad Y_0^Tw=0,\qquad c=w^Tw.
\]
Thus $Y_0\in C_{n-1}$. By the induction hypothesis, there is a unique
\[
X_0=\Phi_{n-1}^{-1}(Y_0)\in B_{n-1}
\]
with $\rank X_0=\rank Y_0$.

The two nested-space pairs
\[
\Col X_0\subseteq\Null X_0
\qquad\text{and}\qquad
\Col Y_0\subseteq\Null Y_0^T
\]
again have the same inner and outer dimensions, so the same canonical isomorphism
\[
\psi_{X_0,Y_0}:\Null X_0\longrightarrow\Null Y_0^T
\]
is available. Since $w\in\Null Y_0^T$, its preimage
\[
v=\psi_{X_0,Y_0}^{-1}(w)
\]
lies in $\Null X_0$. Define
\[
\Psi_n(Y)=
\begin{pmatrix}
X_0 & v\\
0 & 0
\end{pmatrix}.
\]
Then
\[
\Psi_n(Y)^2=
\begin{pmatrix}
X_0^2 & X_0v\\
0 & 0
\end{pmatrix}=0
\]
because $X_0^2=0$ and $v\in\Null X_0$, so $\Psi_n(Y)\in B_n$.

Moreover, by construction,
\[
\Psi_n(\Phi_n(X))=X
\]
for every $X\in B_n$, and
\[
\Phi_n(\Psi_n(Y))=Y
\]
for every $Y\in C_n$. Hence $\Phi_n$ is a bijection from $B_n$ to $C_n$, and since it preserves rank, it restricts to a bijection $B_n(r)\to C_n(r)$ for every $r$.

This completes the induction and the proof.
\end{proof}
\begin{remark}
The above bijection is canonical and computable with respect to the
standard coordinate order and the upper triangular structure. It does
not require choosing an arbitrary isomorphism for each input; rather,
it is uniquely determined by the RREF--lexicographic rule. However, if
``natural'' is required to mean functorial compatibility with every
linear isomorphism and complete independence of a coordinate ordering,
then naturality in this stronger sense remains an interesting question.
Since upper triangular matrices already fix a standard complete flag,
the present construction may be regarded as a natural flag-compatible
bijection in the combinatorial sense relevant to the problem.
\end{remark}

\section{Rank-Refined Zero Fibers over Arbitrary Finite Fields}
\label{sec:zero-fibers}

\subsection{Unified Notation and Block Recursion}

For an arbitrary finite field $\F_q$, define
\[
\begin{aligned}
B_{n,q}(r)
&=\{X\in\T_n(\F_q):X^2=0,\ \rank X=r\},\\
C_{n,q}(r)
&=\{U\in\T_n(\F_q):U^TU=0,\ \rank U=r\}.
\end{aligned}
\]
Write
\[
b_{n,r}^{(q)}=|B_{n,q}(r)|,\qquad
c_{n,r}^{(q)}=|C_{n,q}(r)|.
\]
We use the convention
\[
b_{0,0}^{(q)}=c_{0,0}^{(q)}=1,
\]
and all out-of-range values are zero.

\begin{proposition}
\label{prop:Bq}
For every finite field $\F_q$, one has
\[
 b_{n,0}^{(q)}=1,
 \qquad
 b_{n,r}^{(q)}=0\quad\text{if }r>\left\lfloor\frac n2\right\rfloor.
\]
For $1\le r\le\lfloor n/2\rfloor$,
\[
\boxed{
b_{n,r}^{(q)}
=q^r b_{n-1,r}^{(q)}
+\bigl(q^{n-r}-q^{r-1}\bigr)b_{n-1,r-1}^{(q)}.
}
\]
\end{proposition}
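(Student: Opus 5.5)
We mirror the $\F_2$ argument of \Cref{thm:explicit-bijection}, now over $\F_q$, and peel off the last column. For $X\in\T_n(\F_q)$ with $X^2=0$, the $(n,n)$ entry of $X^2$ is $x_{nn}^2$. Since $\F_q$ is a field, this forces $x_{nn}=0$, so $X$ decomposes uniquely as
\[
X=\begin{pmatrix}X_0&v\\0&0\end{pmatrix},\qquad X_0\in\T_{n-1}(\F_q),\ v\in\F_q^{n-1}.
\]
Multiplying out shows that $X^2=0$ is equivalent to $X_0^2=0$ together with $X_0v=0$. Hence $B_{n,q}$ is in bijection with the set of pairs $(X_0,v)$ with $X_0\in B_{n-1,q}$ and $v\in\Null X_0$.

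Next we record the rank behavior. The last column of $X$ is $(v,0)^T$, so
\[
\rank X=\begin{cases}\rank X_0,& v\in\Col X_0,\\ \rank X_0+1,& v\notin\Col X_0.\end{cases}
\]
Since $X_0^2=0$, we have the nesting $\Col X_0\subseteq\Null X_0$.

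Now fix $X_0$ of rank $s$. Then $\dim\Col X_0=s$ and $\dim\Null X_0=n-1-s$. Therefore $q^s$ choices of $v$ keep the rank equal to $s$, and $q^{n-1-s}-q^s$ choices raise it to $s+1$. Summing over the two possible predecessor ranks gives the recurrence:
\begin{itemize}
\item predecessors of rank $s=r$ contribute $q^r b_{n-1,r}^{(q)}$;
\item predecessors of rank $s=r-1$ contribute $\bigl(q^{n-r}-q^{r-1}\bigr)b_{n-1,r-1}^{(q)}$.
\end{itemize}
No other predecessor rank can produce rank $r$, because adding one column changes the rank by at most one.

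For the boundary statements we argue directly.
\begin{itemize}
\item If $r=0$, then $X=0$ is the only element, so $b_{n,0}^{(q)}=1$. This is consistent with the recurrence, since there is no predecessor of rank $-1$ and the multiplier for $s=0$ is $q^0=1$.
\item If $r>\lfloor n/2\rfloor$, then $\Col X\subseteq\Null X$ gives $r\le n-r$, so $2r\le n$, a contradiction. Hence $b_{n,r}^{(q)}=0$.
\end{itemize}

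Finally we check the recurrence at the top of the range. When $r=\lfloor n/2\rfloor$ with $n$ odd, the term $b_{n-1,r}^{(q)}$ is nonzero, and the coefficient $q^{n-r}-q^{r-1}$ is a positive integer. At the induction level $b_{n-1,\cdot}$ vanishes beyond $\lfloor (n-1)/2\rfloor$, so out-of-range terms drop out automatically. Moreover, the count $q^{n-1-s}-q^s$ is always nonnegative for any $X_0$ that actually occurs, since $2s\le n-1$ there. There is no real obstacle here; the only point needing care is that the counts $q^s$ and $q^{n-1-s}$ are independent of the particular $X_0$, which holds because they depend only on dimensions.
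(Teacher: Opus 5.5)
Your proof is correct and follows essentially the paper's argument. You peel off the last column, note that the corner entry must vanish, reduce to pairs $(X_0,v)$ with $X_0\in B_{n-1,q}$ and $v\in\Null X_0$, and split according to whether $v\in\Col X_0$ ($q^r$ choices) or $v\in\Null X_0\setminus\Col X_0$ ($q^{n-r}-q^{r-1}$ choices); the support bound from $\Col X\subseteq\Null X$ is also exactly as in the paper, and your closing claim that $b_{n-1,r}^{(q)}\neq0$ at the top of the range is unproved but not needed for the recurrence.
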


\begin{proof}
The only matrix of rank zero is the zero matrix, and it satisfies
$X^2=0$.  Hence $b_{n,0}^{(q)}=1$.  If $X^2=0$, then
\[
 \Col X\subseteq\Null X.
\]
Consequently, if $\rank X=r$, then
\[
 r=\dim\Col X\le\dim\Null X=n-r,
\]
so $2r\le n$.  This proves the asserted support bound.  We now fix
$1\le r\le\lfloor n/2\rfloor$ and argue by
decomposing an arbitrary matrix $X\in B_{n,q}(r)$ according to its
last row and column.

Let
\[
X=
\begin{pmatrix}
X_0 & v\\
0 & a
\end{pmatrix},
\]
where $X_0\in\T_{n-1}(\F_q)$, $v\in\F_q^{n-1}$, and $a\in\F_q$. Since $X$ is upper triangular, this block form is completely general. Direct multiplication gives
\[
X^2=
\begin{pmatrix}
X_0^2 & X_0v+av\\
0 & a^2
\end{pmatrix}
=
\begin{pmatrix}
X_0^2 & (X_0+aI_{n-1})v\\
0 & a^2
\end{pmatrix}.
\]
The condition $X^2=0$ is therefore equivalent to the three equations
\[
X_0^2=0,\qquad a^2=0,\qquad (X_0+aI_{n-1})v=0.
\]
Since $\F_q$ is a field, $a^2=0$ implies $a=0$. Consequently the last equation simplifies to
\[
X_0v=0,
\]
and the first equation becomes $X_0^2=0$. Thus
\[
X\in B_{n,q}(r)
\]
if and only if
\[
X_0\in B_{n-1,q}(s)
\quad\text{for some }s,
\qquad
v\in\Null X_0,
\qquad
a=0.
\]

Now we analyze how the rank of $X$ depends on $X_0$ and $v$. Since $a=0$, the last column of $X$ is
\[
\begin{pmatrix} v\\0 \end{pmatrix}.
\]
Hence
\[
\rank X =
\begin{cases}
\rank X_0, & v\in\Col X_0,\\[1mm]
\rank X_0+1, & v\notin\Col X_0.
\end{cases}
\]
Therefore, to obtain $\rank X=r$, the rank of $X_0$ must be either $r$ or $r-1$.

\paragraph{Case 1: $\rank X_0=r$.}
Then to keep the rank equal to $r$, we must have $v\in\Col X_0$. Since $\dim\Col X_0=r$, there are exactly $q^r$ choices of $v$ in $\Col X_0$, and each such $v$ automatically lies in $\Null X_0$ because $X_0^2=0$ implies $\Col X_0\subseteq\Null X_0$. Thus each matrix $X_0$ of rank $r$ contributes exactly $q^r$ matrices to $B_{n,q}(r)$. The number of such $X_0$ is $b_{n-1,r}^{(q)}$, giving the first term
\[
q^r b_{n-1,r}^{(q)}.
\]

\paragraph{Case 2: $\rank X_0=r-1$.}
Then we must have $v\notin\Col X_0$ to increase the rank to $r$. Additionally, $v$ must satisfy $X_0v=0$, i.e.\ $v\in\Null X_0$. By the rank--nullity theorem,
\[
\dim\Null X_0=(n-1)-\rank X_0=(n-1)-(r-1)=n-r,
\]
so
\[
|\Null X_0|=q^{n-r}.
\]
Among these null vectors, exactly those lying in $\Col X_0$ would preserve the rank $r-1$; since $\dim\Col X_0=r-1$, there are $q^{r-1}$ such vectors. Hence the number of choices of $v$ that increase the rank to $r$ is
\[
q^{n-r}-q^{r-1}.
\]
The number of such $X_0$ is $b_{n-1,r-1}^{(q)}$, giving the second term
\[
\bigl(q^{n-r}-q^{r-1}\bigr)b_{n-1,r-1}^{(q)}.
\]

No other rank of $X_0$ can produce $\rank X=r$. Therefore, summing over the two possible ranks of $X_0$, we obtain
\[
b_{n,r}^{(q)}
=
q^r b_{n-1,r}^{(q)}
+
\bigl(q^{n-r}-q^{r-1}\bigr)b_{n-1,r-1}^{(q)}.
\]

Finally, the convention
\[
b_{0,0}^{(q)}=1,\qquad b_{0,r}^{(q)}=0\quad(r\ne0)
\]
provides the base case for the recursion, and all out-of-range indices are treated as zero. This completes the proof.
\end{proof}
\subsection{All Even-Characteristic Finite Fields}

\begin{theorem}
\label{thm:even-recurrence}
If $q=2^e$, then
\[
 c_{n,0}^{(q)}=1,
 \qquad
 c_{n,r}^{(q)}=0\quad\text{if }r>\left\lfloor\frac n2\right\rfloor.
\]
For $1\le r\le\lfloor n/2\rfloor$,
\[
\boxed{
c_{n,r}^{(q)}
=q^r c_{n-1,r}^{(q)}
+\bigl(q^{n-r}-q^{r-1}\bigr)c_{n-1,r-1}^{(q)}.
}
\]
Consequently,
\[
\boxed{
b_{n,r}^{(q)}=c_{n,r}^{(q)}
}
\]
for all $n$ and $r$.
\end{theorem}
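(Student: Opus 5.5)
We follow the proof of \Cref{prop:Bq} and the $\F_2$ computation in \Cref{cor:lpn}, peeling off the last row and column. The case analysis transfers because squaring is additive in characteristic two. Write $U\in\T_n(\F_q)$ as
\[
U=\begin{pmatrix}U_0&w\\0&c\end{pmatrix},\qquad U_0\in\T_{n-1}(\F_q),\ w\in\F_q^{n-1},\ c\in\F_q .
\]
Then $U^TU=0$ is equivalent to
\[
U_0^TU_0=0,\qquad U_0^Tw=0,\qquad w^Tw+c^2=0 .
\]
Since $q=2^e$, Frobenius is a bijection of $\F_q$, so for each admissible $w$ there is exactly one $c$ with $c^2=w^Tw$. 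Hence extensions of a fixed $U_0$ correspond bijectively to $w\in\Null U_0^T$, and there are $q^{(n-1)-\rank U_0}$ of them.

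The base values come first. We have $c_{n,0}^{(q)}=1$ because only the zero matrix has rank zero. For the support bound, $U^TU=0$ gives $\Col U\subseteq\Null U^T$, so $r\le n-r$ and $c_{n,r}^{(q)}=0$ when $r>\lfloor n/2\rfloor$.

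For the rank analysis, the last column $(w,c)^T$ lies in the span of the earlier columns iff $w\in\Col U_0$ and $c=0$. The key point is that $w\in\Col U_0$ forces $c=0$. Writing $w=U_0x$ gives $w^Tw=x^TU_0^TU_0x=0$, so $c^2=0$ and hence $c=0$. This is where characteristic two matters only mildly: the forcing $c^2=0\Rightarrow c=0$ holds over any field. The essential use of even characteristic is that \emph{every} $w\in\Null U_0^T$ gives exactly one $c$, with no square-class obstruction. Also $\Col U_0\subseteq\Null U_0^T$ by total isotropy. So exactly $q^{\rank U_0}$ extensions preserve rank, and the remaining $q^{(n-1)-\rank U_0}-q^{\rank U_0}$ raise it by one.

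Summing over $\rank U_0=r$ and $\rank U_0=r-1$ gives
\[
c_{n,r}^{(q)}=q^r c_{n-1,r}^{(q)}+\bigl(q^{n-r}-q^{r-1}\bigr)c_{n-1,r-1}^{(q)} .
\]
This is the same recurrence as in \Cref{prop:Bq}, with the same initial values $c_{0,0}^{(q)}=b_{0,0}^{(q)}=1$, the same $r=0$ column, and the same support. Induction on $n$ then gives $b_{n,r}^{(q)}=c_{n,r}^{(q)}$.

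The main obstacle is making sure the count of $c$ is exactly one for every $w$, and not zero or two. This is precisely what fails in odd characteristic, where $-w^Tw$ must be a square, so it must be argued cleanly via bijectivity of Frobenius. Everything else is a direct transcription of the $\F_2$ argument in \Cref{cor:lpn}, with $2$ replaced by $q$.
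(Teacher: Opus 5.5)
Your proposal is correct and follows essentially the same route as the paper. You peel off the last column, use bijectivity of Frobenius so that each $w\in\Null U_0^T$ determines exactly one $c$, observe that $w\in\Col U_0$ forces $c=0$ by total isotropy, and match the resulting recurrence and initial data with \Cref{prop:Bq}. The only cosmetic difference is that you get the support bound from $\Col U\subseteq\Null U^T$ and rank--nullity, whereas the paper uses $W\subseteq W^\perp$ and nondegeneracy of the dot product; the two are equivalent.
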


\begin{proof}
The only rank-zero matrix is the zero matrix, which satisfies
$U^TU=0$.  Thus $c_{n,0}^{(q)}=1$.  If $U^TU=0$ and
$W=\Col U$, then for arbitrary $Ux,Uy\in W$,
\[
 (Ux)^T(Uy)=x^TU^TUy=0.
\]
Thus $W\subseteq W^\perp$.  Since the standard dot product is
nondegenerate, $\dim W^\perp=n-\dim W$, and hence
\[
 \rank U\le n-\rank U.
\]
This proves that $c_{n,r}^{(q)}=0$ when $r>\lfloor n/2\rfloor$.
Fix $1\le r\le\lfloor n/2\rfloor$.  We analyze the
possible one-step extensions of an upper triangular matrix
$U$ satisfying $U^TU=0$. Write $U$ in block form
\[
U=
\begin{pmatrix}
U_0 & w\\
0 & c
\end{pmatrix},
\qquad
U_0\in\T_{n-1}(\F_q),\quad w\in\F_q^{n-1},\quad c\in\F_q.
\]
Then
\[
U^TU
=
\begin{pmatrix}
U_0^T & 0\\
w^T & c
\end{pmatrix}
\begin{pmatrix}
U_0 & w\\
0 & c
\end{pmatrix}
=
\begin{pmatrix}
U_0^TU_0 & U_0^Tw\\
w^TU_0 & w^Tw+c^2
\end{pmatrix}.
\]
Thus $U^TU=0$ is equivalent to the three conditions
\[
U_0^TU_0=0,\qquad U_0^Tw=0,\qquad w^Tw+c^2=0.
\]
The first condition says $U_0\in C_{n-1,q}$. The second says
$w\in\Null U_0^T$. If we let
\[
W=\Col U_0,
\]
then over $\F_q$ with the standard dot product,
\[
\Null U_0^T=W^\perp.
\]
Moreover, the condition $U_0^TU_0=0$ implies that $W$ is totally
isotropic, that is,
\[
W\subseteq W^\perp,
\]
because for arbitrary $y=U_0x$ and $z=U_0x'$ in $W$,
\[
y^Tz=(U_0x)^T(U_0x')=x^TU_0^TU_0x'=0.
\]
In particular, every vector in $W$ has squared length zero.

Now assume $q=2^e$, so that $\F_q$ has characteristic $2$. Then the map
\[
\varphi:\F_q\to\F_q,\qquad x\mapsto x^2
\]
is bijective. Hence for every $w\in W^\perp$, the equation
\[
c^2=-w^Tw
\]
has a unique solution
\[
c=\bigl(-w^Tw\bigr)^{q/2}.
\]
Therefore, once $U_0$ and $w$ are chosen with $w\in W^\perp$, the entry
$c$ is uniquely determined. Thus the number of extensions of a fixed
$U_0$ is exactly $|W^\perp|$.

We now determine how the rank of $U$ depends on $U_0$ and $w$. The
matrix
\[
U=
\begin{pmatrix}
U_0 & w\\
0 & c
\end{pmatrix}
\]
has its first $n-1$ columns equal to the columns of
\[
\begin{pmatrix}
U_0\\0
\end{pmatrix},
\]
whose first $n-1$ coordinates span $W$ and whose last coordinate is
zero. Therefore
\[
\rank U=
\begin{cases}
\rank U_0, & w\in W \text{ and } c=0,\\[1mm]
\rank U_0+1, & \text{otherwise}.
\end{cases}
\]
If $w\in W$, then since $W$ is totally isotropic, $w^Tw=0$. The
equation $c^2=-w^Tw$ gives $c^2=0$, hence $c=0$ because $\F_q$ is a
field. Thus for $w\in W$ we automatically have $c=0$, and the rank is
preserved.

Now suppose $\rank U_0=r$. Then $\dim W=r$, so $|W|=q^r$. For each
$w\in W$, the resulting $U$ has rank $r$. These are exactly the
rank-preserving extensions, and their number is $q^r$. Therefore the
contribution from old roots of rank $r$ is
\[
q^r c_{n-1,r}^{(q)}.
\]

Next suppose $\rank U_0=r-1$. Then
\[
\dim W=r-1,\qquad
\dim W^\perp=(n-1)-(r-1)=n-r.
\]
Thus there are $q^{n-r}$ possible choices of $w\in W^\perp$. Among
these, exactly the $q^{r-1}$ vectors in $W$ preserve the old rank
$r-1$. The remaining
\[
q^{n-r}-q^{r-1}
\]
vectors $w\notin W$ make the last column linearly independent of the
previous columns, so the rank increases to $r$. Therefore the
contribution from old roots of rank $r-1$ is
\[
\bigl(q^{n-r}-q^{r-1}\bigr)c_{n-1,r-1}^{(q)}.
\]

No other rank of $U_0$ can produce rank $r$ after a one-step extension.
Hence
\[
c_{n,r}^{(q)}
=
q^r c_{n-1,r}^{(q)}
+
\bigl(q^{n-r}-q^{r-1}\bigr)c_{n-1,r-1}^{(q)}.
\]

Finally, the initial conditions for $c^{(q)}$ are the same as those for
$b^{(q)}$:
\[
c_{0,0}^{(q)}=b_{0,0}^{(q)}=1,
\qquad
c_{0,r}^{(q)}=b_{0,r}^{(q)}=0\quad(r\ne0).
\]
Since the rank-zero values agree and the recurrences agree for every
$r\ge1$, induction on $n$ gives
\[
b_{n,r}^{(q)}=c_{n,r}^{(q)}
\]
for all $n\ge0$ and all $r$.
\end{proof}
The following corollary records the total equinumerosity of the three families of upper triangular matrices in even characteristic.
\begin{corollary}
\label{cor:three-even}
For every $q=2^e$, the three families
\begin{itemize}
\item upper triangular matrices with square zero,
\item upper triangular Cholesky roots of the zero matrix,
\item upper triangular matrices with square equal to the identity
\end{itemize}
are equinumerous in total.
\end{corollary}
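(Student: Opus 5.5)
The plan is to establish two equalities of totals separately and then chain them. Write $N_1,N_2,N_3$ for the cardinalities of the three families in $\T_n(\F_q)$, with $q=2^e$.

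\emph{First equality, $N_1=N_2$.} I will deduce this directly from \Cref{thm:even-recurrence}. That theorem gives $b_{n,r}^{(q)}=c_{n,r}^{(q)}$ for every $n$ and every $r$. Every matrix in either family has some rank $r$ with $0\le r\le n$, so summing over $r$ gives
\[
N_1=\sum_{r}b_{n,r}^{(q)}=\sum_r c_{n,r}^{(q)}=N_2 .
\]
No further work is needed here.

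\emph{Second equality, $N_1=N_3$.} I will use the translation $X\mapsto X+I_n$. It maps $\T_n(\F_q)$ to itself, because adding the identity only changes diagonal entries. It is its own inverse, since $(X+I_n)+I_n=X+2I_n=X$ in characteristic two. The key computation is that $X$ commutes with $I_n$, which gives
\[
(X+I_n)^2=X^2+2X+I_n=X^2+I_n .
\]
Hence $(X+I_n)^2=0$ holds exactly when $X^2=I_n$. So the translation restricts to a bijection between upper triangular matrices with $X^2=I_n$ and upper triangular matrices with square zero, and therefore $N_3=N_1$.

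Combining the two steps gives $N_1=N_2=N_3$. I do not expect a real obstacle. The only point needing care is that the correspondence with involutions is \emph{total} and not rank-refined: the translation does not preserve rank (for example, $0\mapsto I_n$). This is why the statement asserts equinumerosity only in total, and the proof should not claim more than that.
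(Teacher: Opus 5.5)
Your proposal is correct and follows the paper's proof essentially verbatim: you sum the rank-refined equality $b_{n,r}^{(q)}=c_{n,r}^{(q)}$ from \Cref{thm:even-recurrence} over $r$, and then use the characteristic-two involution $X\mapsto X+I_n$ to match square-zero matrices with those squaring to $I_n$. Your added remark that this translation does not preserve rank is accurate. It correctly explains why only total equinumerosity is claimed for the third family.
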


\begin{proof}
Let $q=2^e$. Recall the definitions
\[
\begin{aligned}
A_{n,q} &= \{X\in\T_n(\F_q): X^2=I_n\},\\
B_{n,q} &= \{X\in\T_n(\F_q): X^2=0\},\\
C_{n,q} &= \{U\in\T_n(\F_q): U^TU=0\}.
\end{aligned}
\]
By \Cref{thm:even-recurrence}, for every $n$ and every rank $r$, we have
\[
|B_{n,q}(r)|=|C_{n,q}(r)|.
\]
Summing over all $r$ gives
\[
|B_{n,q}|=|C_{n,q}|.
\]

It remains to show that $|A_{n,q}|=|B_{n,q}|$. Since $q=2^e$, the field $\F_q$ has characteristic $2$. For any matrix $X\in\T_n(\F_q)$, define
\[
\Phi(X)=X+I_n.
\]
Because $I_n$ is upper triangular and the sum of upper triangular matrices is upper triangular, $\Phi(X)\in\T_n(\F_q)$.

We claim that $\Phi$ maps $B_{n,q}$ bijectively onto $A_{n,q}$. Suppose first that $X\in B_{n,q}$, so $X^2=0$. Then
\[
\Phi(X)^2=(X+I_n)^2.
\]
Expanding in characteristic $2$, we have
\[
(X+I_n)^2=X^2+2X+I_n^2.
\]
Since $2=0$ in $\F_q$ and $I_n^2=I_n$, this simplifies to
\[
(X+I_n)^2=X^2+I_n=0+I_n=I_n.
\]
Thus $\Phi(X)\in A_{n,q}$, so $\Phi$ maps $B_{n,q}$ into $A_{n,q}$.

Conversely, if $Y\in A_{n,q}$, then $Y^2=I_n$. Define
\[
\Psi(Y)=Y+I_n.
\]
Then
\[
\Psi(Y)^2=(Y+I_n)^2
=Y^2+2Y+I_n^2
=I_n+0+I_n
=2I_n
=0,
\]
because $2=0$ in characteristic $2$. Hence $\Psi(Y)\in B_{n,q}$.

Now observe that $\Phi$ and $\Psi$ are inverse maps. Indeed, for any $X\in\T_n(\F_q)$,
\[
\Psi(\Phi(X))=\Psi(X+I_n)=(X+I_n)+I_n=X+2I_n=X,
\]
and similarly
\[
\Phi(\Psi(Y))=\Phi(Y+I_n)=(Y+I_n)+I_n=Y+2I_n=Y.
\]
Therefore $\Phi$ is a bijection between $B_{n,q}$ and $A_{n,q}$, and consequently
\[
|A_{n,q}|=|B_{n,q}|.
\]

Combining the two equalities yields
\[
|A_{n,q}|=|B_{n,q}|=|C_{n,q}|,
\]
which proves that the three families are equinumerous in total.
\end{proof}
\begin{remark}
The explicit construction of \Cref{thm:explicit-bijection} also extends
to $\F_{2^e}$: one only needs to define the last diagonal entry as
\[
c=(-w^Tw)^{q/2}.
\]
The nested-space isomorphism can be constructed canonically after
fixing an $\F_2$-basis and a fixed lexicographic order.
\end{remark}

\subsection{Number of Extension Pairs in Odd Characteristic}
The quadratic and orthogonal structures used below belong to the
classical finite-field theory represented, for example, by MacWilliams
\cite{MacWilliams1969}.  For the rest of this section assume that $q$ is
odd, and let $\chi:\F_q\to\{-1,0,1\}$ be the quadratic character.  The
following lemma is a standard consequence of
\cite[Theorem~6.26]{LidlNiederreiter} and
\cite[Theorem~6.27]{LidlNiederreiter}; for completeness and the reader's
convenience, we include a proof here.

\begin{lemma}
\label{lem:quadratic-zeros}
Let
\[
Q(x)=x^TAx
\]
be a nondegenerate quadratic form on $\F_q^d$. Put
\[
Z(Q)=|\{x\in\F_q^d:Q(x)=0\}|.
\]
Then
\[
Z(Q)=
\begin{cases}
q^{d-1},&d\text{ odd},\\[1mm]
q^{d-1}
+(q-1)\chi\!\left((-1)^{d/2}\det A\right)q^{(d-2)/2},
&d\text{ even}.
\end{cases}
\]
\end{lemma}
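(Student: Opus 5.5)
We plan to diagonalize and then count zeros with additive characters. Since $q$ is odd, the symmetric matrix $A$ is congruent over $\F_q$ to a diagonal matrix $\diag(a_1,\ldots,a_d)$ with every $a_i\ne0$. Both $Z(Q)$ and the square class of $\det A$ are invariant under an invertible change of variables $x\mapsto Px$. So we may assume $Q(x)=\sum_i a_ix_i^2$ and $\det A=a_1\cdots a_d$.

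Fix a nontrivial additive character $\psi$ of $\F_q$ and apply orthogonality:
\[
Z(Q)=\frac1q\sum_{t\in\F_q}\sum_{x}\psi\bigl(tQ(x)\bigr)
=q^{d-1}+\frac1q\sum_{t\ne0}\prod_{i=1}^d G(ta_i),
\]
where $G(c)=\sum_{u\in\F_q}\psi(cu^2)$. We then need the standard quadratic Gauss sum facts:
\begin{itemize}
\item For $c\ne0$ we have $G(c)=\chi(c)G(1)$. This follows from $G(c)=\sum_u(1+\chi(u))\psi(cu)=\sum_u\chi(u)\psi(cu)$ and the substitution $u\mapsto c^{-1}u$.
\item $G(1)^2=\chi(-1)q$. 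This comes from the usual computation $|G(1)|^2=q$ together with $\overline{G(1)}=G(-1)=\chi(-1)G(1)$.
\end{itemize}
With these, the product becomes
\[
\prod_i G(ta_i)=\chi(t)^d\,\chi(\det A)\,G(1)^d.
\]

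Next we split on the parity of $d$. If $d$ is odd, then $\sum_{t\ne0}\chi(t)^d=\sum_{t\ne0}\chi(t)=0$, so $Z(Q)=q^{d-1}$. If $d$ is even, then $\chi(t)^d=1$, so the sum over $t\ne0$ contributes a factor $q-1$. In that case $G(1)^d=(\chi(-1)q)^{d/2}=\chi((-1)^{d/2})q^{d/2}$. Dividing by $q$ gives
\[
q^{d-1}+(q-1)\chi\bigl((-1)^{d/2}\det A\bigr)q^{(d-2)/2},
\]
which is exactly the stated formula.

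The main obstacle is establishing the Gauss sum identity $G(1)^2=\chi(-1)q$ cleanly. Alternatively, as the paper indicates, we may cite \cite[Theorems~6.26,~6.27]{LidlNiederreiter}.

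A character-free fallback is induction on $d$. We would split off a hyperbolic plane (or, for $d\le2$, count the zeros of $a_1x^2+a_2y^2$ directly) using the counts $N(x^2=c)=1+\chi(c)$. The step to check carefully there is how the discriminant's square class propagates through the splitting.
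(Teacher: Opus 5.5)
Your proposal is correct and follows the paper's proof essentially step for step: orthogonality of a nontrivial additive character, diagonalization to $\sum_i a_ix_i^2$, the scaling identity $\sum_x\psi(ax^2)=\chi(a)G$, and the parity split using $G^2=\chi(-1)q$ and $\chi(a_1\cdots a_d)=\chi(\det A)$. The only difference is that you sketch derivations of the two Gauss-sum facts (both sketches are valid), whereas the paper simply invokes them as standard.
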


\begin{proof}
Let $\psi$ be a nontrivial additive character of $\F_q$. By the
orthogonality relation for additive characters,
\[
\frac1q\sum_{t\in\F_q}\psi(t\alpha)=
\begin{cases}
1,&\alpha=0,\\
0,&\alpha\ne0.
\end{cases}
\]
Applying this to $\alpha=Q(x)$, we obtain
\[
Z(Q)
=\sum_{x\in\F_q^d}\ind_{Q(x)=0}
=\sum_{x\in\F_q^d}\frac1q\sum_{t\in\F_q}\psi(tQ(x))
=\frac1q\sum_{t\in\F_q}\sum_{x\in\F_q^d}\psi(tQ(x)).
\]
The term with $t=0$ contributes
\[
\frac1q\sum_{x\in\F_q^d}\psi(0)=\frac1q\cdot q^d=q^{d-1}.
\]

Now assume $t\ne0$. Since $q$ is odd, every nondegenerate quadratic
form over $\F_q$ is equivalent by an invertible linear change of
variables to a diagonal form
\[
Q(x)=a_1x_1^2+\cdots+a_dx_d^2,
\]
where $a_1,\ldots,a_d\in\F_q^\times$. Because the change of variables
is invertible, it does not change the number of zeros of $Q$. Let
\[
G=\sum_{x\in\F_q}\psi(x^2)
\]
be the standard quadratic Gauss sum. For any $a\in\F_q^\times$, the
scaling property of Gauss sums gives
\[
\sum_{x\in\F_q}\psi(ax^2)=\chi(a)G.
\]
Therefore, for fixed $t\ne0$,
\[
\sum_{x\in\F_q^d}\psi(tQ(x))
=\prod_{i=1}^d\sum_{x_i\in\F_q}\psi(ta_ix_i^2)
=\prod_{i=1}^d\chi(ta_i)G
=\chi(t)^d\chi(a_1\cdots a_d)G^d.
\]

If $d$ is odd, then $\chi(t)^d=\chi(t)$, and
\[
\sum_{t\ne0}\chi(t)=0
\]
because exactly half of the nonzero elements of $\F_q$ are squares
and half are nonsquares. Hence the total contribution of the $t\ne0$
terms vanishes, and
\[
Z(Q)=q^{d-1}.
\]

Now suppose $d$ is even. Then $\chi(t)^d=1$ for every $t\ne0$,
and
\[
\sum_{t\ne0}\chi(t)^d=\sum_{t\ne0}1=q-1.
\]
Thus
\[
Z(Q)
=q^{d-1}
+\frac1q(q-1)\chi(a_1\cdots a_d)G^d.
\]
Using the standard Gauss sum formula
\[
G^2=\chi(-1)q,
\]
we obtain
\[
G^d=(G^2)^{d/2}
=\chi(-1)^{d/2}q^{d/2}
=\chi\bigl((-1)^{d/2}\bigr)q^{d/2}.
\]
Therefore
\[
\frac{G^d}{q}
=\chi\bigl((-1)^{d/2}\bigr)q^{(d-2)/2}.
\]
Finally, since the diagonalization step changes the determinant of the
Gram matrix by a nonzero square, we have
\[
\chi(a_1\cdots a_d)=\chi(\det A).
\]
Substituting these expressions gives
\[
Z(Q)
=q^{d-1}
+(q-1)\chi\bigl((-1)^{d/2}\det A\bigr)q^{(d-2)/2}.
\]
This completes the proof.
\end{proof}

Let $m=n-1$, and fix
\[
U_0\in C_{m,q}(s),\qquad W=\Col U_0.
\]
Then $W$ is an $s$-dimensional totally isotropic subspace of the
standard dot-product space $\F_q^m$. Define
\[
E_{m,s}
=\#\{(w,c)\in W^\perp\times\F_q:w^Tw+c^2=0\}.
\]

\begin{lemma}
\label{lem:Ems}
If $q$ is odd, then
\[
\boxed{
E_{m,s}=
\begin{cases}
q^{m-s},&m\text{ even},\\[1mm]
q^{m-s}
+(q-1)\chi\!\left((-1)^{(m+1)/2}\right)q^{(m-1)/2},
&m\text{ odd}.
\end{cases}}
\]
This number depends only on $m,s,q$, and not on the particular totally
isotropic subspace $W$.
\end{lemma}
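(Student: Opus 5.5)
We view $E_{m,s}$ as the number of zeros of the quadratic form $Q(w,c)=w^Tw+c^2$ on the $(m-s+1)$-dimensional space $W^\perp\oplus\F_q$. This form is degenerate, and its radical contains $W$. The plan is to pass to the quotient $(W^\perp/W)\oplus\F_q$, show that the induced form is nondegenerate, and then apply \Cref{lem:quadratic-zeros}.

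\textbf{Step 1: reduce to a nondegenerate form.} The standard dot product on $\F_q^m$ is nondegenerate, so $(W^\perp)^\perp=W$. Hence the radical of the dot product restricted to $W^\perp$ is $W^\perp\cap W=W$, using $W\subseteq W^\perp$. The dot product therefore descends to a nondegenerate symmetric form $\beta$ on $\bar V=W^\perp/W$, of dimension $m-2s$. Moreover $Q(w+u,c)=Q(w,c)$ for $u\in W$, because $u^Tu=0$ and $w^Tu=0$. Each fibre of $W^\perp\to\bar V$ has $q^s$ elements, so
\[
E_{m,s}=q^s\,Z(\bar Q),\qquad \bar Q(\bar w,c)=\beta(\bar w,\bar w)+c^2 .
\]
The form $\bar Q$ is nondegenerate on a space of dimension $d=m-2s+1$.

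\textbf{Step 2: compute the discriminant.} This is the main obstacle, because \Cref{lem:quadratic-zeros} needs $\chi(\det)$ of $\bar Q$, which is not immediate. The Gram determinant of $\bar Q$ is $\det\beta\cdot 1$, so we need the square class of $\det\beta$. Choose $W'$, a totally isotropic complement pairing perfectly with $W$; this is possible by Witt theory. Then there is an orthogonal decomposition $\F_q^m=H\perp \bar V'$, where $H=W\oplus W'$ is a sum of $s$ hyperbolic planes and $\bar V'\cong\bar V$. Each hyperbolic plane has determinant $-1$, so
\[
\det\beta\equiv \det I_m\cdot(-1)^s=(-1)^s
\]
modulo squares. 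A more hands-on alternative is to take a basis of $W$, extend it to a basis adapted to $W\subseteq W^\perp\subseteq\F_q^m$, and compute the Gram matrix in block-antitriangular form. The result is that the Gram determinant of $\bar Q$ is $(-1)^s$ up to squares.

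\textbf{Step 3: apply the lemma.} If $m$ is even, then $d$ is odd, so $Z(\bar Q)=q^{d-1}=q^{m-2s}$ and $E_{m,s}=q^{m-s}$.

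If $m$ is odd, then $d$ is even with $d/2=(m+1)/2-s$. The lemma gives
\[
Z(\bar Q)=q^{m-2s}+(q-1)\,\chi\!\left((-1)^{(m+1)/2-s}(-1)^s\right)q^{(m-1)/2-s}.
\]
The signs combine to $\chi((-1)^{(m+1)/2})$. Multiplying by $q^s$ gives the stated formula.

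Independence of $W$ follows because the final expression involves only $m,s,q$. When $s=m-2s+\dots$ leaves $d$ small, e.g. $\bar V=0$, the same computation holds, since the form is then $c^2$ on a one-dimensional space. Thus no separate edge case is needed.
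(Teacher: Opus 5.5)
Your proposal is correct and follows essentially the same route as the paper: pass to the nondegenerate quotient $W^\perp/W$, pick up the factor $q^s$ from the fibres, obtain the discriminant $(-1)^s$ from a hyperbolic splitting $\F_q^m\cong\mathbb{H}^s\perp(W^\perp/W)$, and apply \Cref{lem:quadratic-zeros} with $d=m-2s+1$. The only difference is that you cite Witt theory for the isotropic complement $W'$, and you leave implicit the check that $W^\perp=W\oplus(W\oplus W')^\perp$, which is what gives $\bar V'\cong\bar V$; the paper instead constructs the hyperbolic partners $z_i'$ explicitly, using that $q$ is odd.
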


\begin{proof}
Fix $U_0\in C_{m,q}(s)$, and let $W=\Col U_0$. Then
\[
\dim W=s,
\]
and the condition $U_0^TU_0=0$ implies that $W$ is totally isotropic
with respect to the standard dot product:
\[
W\subseteq W^\perp.
\]
Here
\[
W^\perp=\{x\in\F_q^m:x^Ty=0\text{ for all }y\in W\}.
\]
Since $W\subseteq W^\perp$, the quotient space
\[
H=W^\perp/W
\]
is well defined. The dot product on $W^\perp$ induces a symmetric
bilinear form on $H$ by
\[
\bar x\cdot\bar y=x^Ty,
\]
where $x,y\in W^\perp$ are arbitrary representatives of the cosets
$\bar x,\bar y$. This is well defined because any two representatives
differ by an element of $W$, and $W$ is orthogonal to $W^\perp$.
The induced quadratic form on $H$ is
\[
Q_H(\bar x)=x^Tx,
\]
again well defined for the same reason.

We claim that $Q_H$ is nondegenerate. First, the induced bilinear form on $H$ is given by
\[
B_H(\bar x,\bar y)=x^Ty,
\]
where $x,y\in W^\perp$ are arbitrary representatives of the cosets $\bar x,\bar y$. Now suppose $\bar x\in H$ satisfies
\[
B_H(\bar x,\bar y)=0
\qquad\text{for all }\bar y\in H.
\]
By the definition of $B_H$, this means
\[
x^Ty=0
\qquad\text{for all }y\in W^\perp,
\]
where $x\in W^\perp$ is a representative of $\bar x$. Therefore
\[
x\in (W^\perp)^\perp.
\]
Since the standard dot product on $\F_q^m$ is nondegenerate, for every subspace $U\subseteq\F_q^m$ we have $(U^\perp)^\perp=U$. Taking $U=W$, we obtain
\[
(W^\perp)^\perp=W.
\]
Thus $x\in W$, which means that the coset $\bar x=x+W$ is the zero coset in $H=W^\perp/W$. Hence $\bar x=0$. Thus $H$ is a
nondegenerate quadratic space of dimension
\[
\dim H=\dim W^\perp-\dim W=(m-s)-s=m-2s.
\]

Now consider the set
\[
E_{m,s}=\#\{(w,c)\in W^\perp\times\F_q:w^Tw+c^2=0\}.
\]
For each coset $\bar w\in H$, the number of lifts $w\in W^\perp$ is
exactly $|W|=q^s$. Moreover, all lifts have the same squared length,
because if $w,w'\in W^\perp$ represent the same coset, then
$w-w'\in W$, and $W$ is totally isotropic, so
\[
w^Tw=(w'+(w-w'))^T(w'+(w-w'))
=w'^Tw'+2w'^T(w-w')+(w-w')^T(w-w')
=w'^Tw',
\]
where we used $w'^T(w-w')=0$ because $W\subseteq W^\perp$, and
$(w-w')^T(w-w')=0$ because $w-w'\in W$. Thus the squared length
depends only on the coset $\bar w$.

Therefore the number of pairs $(w,c)$ with $w$ lifting a fixed
$\bar w$ and satisfying $w^Tw+c^2=0$ is exactly the number of
$c\in\F_q$ such that
\[
Q_H(\bar w)+c^2=0.
\]
This is the same as counting zeros of the quadratic form
\[
Q_H(\bar w)+c^2
\]
on the space $H\bot\langle1\rangle$, where the added one-dimensional
form is $c^2$. Hence
\[
E_{m,s}=q^s\, Z\bigl(H\bot\langle1\rangle\bigr),
\]
where $Z(Q)$ denotes the number of zeros of a quadratic form $Q$.

We need the determinant, up to squares, of the Gram matrix of the
quadratic form on $H\bot\langle1\rangle$. Let
\[
d=\dim(H\bot\langle1\rangle)=\dim H+1=m-2s+1.
\]
The standard dot product on $\F_q^m$ has Gram determinant $1$.
We now justify the required hyperbolic splitting.  Choose a basis
$w_1,\ldots,w_s$ of $W$.  The linear functionals
$x\mapsto w_i^Tx$ are linearly independent because the ambient dot
product is nondegenerate.  Hence there exist vectors
$z_1,\ldots,z_s\in\F_q^m$ such that
\[
 w_i^Tz_j=\delta_{ij}.
\]
Let $M=(z_i^Tz_j)_{i,j}$ and replace each $z_i$ by
\[
 z_i'=z_i-\frac12\sum_{j=1}^s M_{ij}w_j.
\]
This replacement is legitimate because $q$ is odd.  Since
$w_k^Tw_j=0$, it preserves the pairings $w_k^Tz_i'=\delta_{ki}$.
Moreover, using the symmetry of $M$ and the total isotropy of $W$, we
obtain for all $i,\ell$
\[
 \begin{aligned}
 (z_i')^Tz_\ell'
 &=M_{i\ell}-\frac12M_{i\ell}-\frac12M_{\ell i}+0\\
 &=0.
 \end{aligned}
\]
Thus each pair $(w_i,z_i')$ spans a hyperbolic plane, and distinct
pairs are mutually orthogonal.  Put
\[
 Z=\Span\{z_1',\ldots,z_s'\},
 \qquad K=(W\mathbin\oplus Z)^\perp.
\]
The Gram matrix on the span of the $w_i$ and $z_i'$ is
$\begin{psmallmatrix}0&I_s\\I_s&0\end{psmallmatrix}$ and is
nondegenerate.  It follows that
\[
 \F_q^m\cong\mathbb H^s\mathbin\bot K.
\]
In this decomposition, a vector is orthogonal to every $w_i$ exactly
when its $Z$-component vanishes.  Therefore
$W^\perp=W\mathbin\oplus K$, and the quotient form on
$H=W^\perp/W$ is isometric to the form on $K$.  We may consequently
write the preceding decomposition as
\[
 \F_q^m\cong\mathbb H^s\mathbin\bot H.
\]
Each hyperbolic plane has Gram determinant $-1$.
Taking determinants up to squares gives
\[
1\equiv (-1)^s\det H \pmod{(\F_q^\times)^2},
\]
because the standard Gram determinant is $1$. Hence
\[
\det H\equiv (-1)^s.
\]
Adding the one-dimensional form $\langle1\rangle$ does not change the
square class of the determinant, so
\[
\det(H\bot\langle1\rangle)\equiv \det H\equiv (-1)^s.
\]

By Lemma $\ref{lem:quadratic-zeros}$, the number of zeros of a nondegenerate
quadratic form of dimension $d$ and determinant $\Delta$ is
\[
Z=
\begin{cases}
q^{d-1},&d\text{ odd},\\[1mm]
q^{d-1}+(q-1)\chi\bigl((-1)^{d/2}\Delta\bigr)q^{(d-2)/2},
&d\text{ even}.
\end{cases}
\]
Here $d=m-2s+1$ and $\Delta\equiv(-1)^s$.

\paragraph{Case 1: $m$ even.}
Then $d=m-2s+1$ is odd. Hence
\[
Z\bigl(H\bot\langle1\rangle\bigr)=q^{d-1}=q^{m-2s}.
\]
Therefore
\[
E_{m,s}=q^s q^{m-2s}=q^{m-s}.
\]

\paragraph{Case 2: $m$ odd.}
Then $d=m-2s+1$ is even. We compute the quadratic character factor:
\[
\begin{aligned}
\chi\bigl((-1)^{d/2}\Delta\bigr)
&=\chi\bigl((-1)^{(m-2s+1)/2}(-1)^s\bigr)\\
&=\chi\bigl((-1)^{(m-2s+1)/2+s}\bigr)\\
&=\chi\bigl((-1)^{(m+1)/2}\bigr).
\end{aligned}
\]
Thus
\[
Z\bigl(H\bot\langle1\rangle\bigr)
=q^{d-1}
+(q-1)\chi\bigl((-1)^{(m+1)/2}\bigr)q^{(d-2)/2}.
\]
Now
\[
d-1=m-2s,
\qquad
\frac{d-2}{2}=\frac{m-2s-1}{2}.
\]
Multiplying by $q^s$, we obtain
\[
\begin{aligned}
E_{m,s}
&=q^s q^{m-2s}
+q^s(q-1)\chi\bigl((-1)^{(m+1)/2}\bigr)q^{(m-2s-1)/2}\\
&=q^{m-s}
+(q-1)\chi\bigl((-1)^{(m+1)/2}\bigr)q^{s+(m-2s-1)/2}\\
&=q^{m-s}
+(q-1)\chi\bigl((-1)^{(m+1)/2}\bigr)q^{(m-1)/2}.
\end{aligned}
\]

This proves both cases. Since the argument depends only on $m,s,q$
and not on the particular choice of the totally isotropic subspace
$W$, the number $E_{m,s}$ is independent of $W$.
\end{proof}
\subsection{Exact Recurrence in Odd Characteristic}
For odd $q$, define the parity-dependent correction
\[
 \kappa_n(q)=
 \begin{cases}
 (q-1)\chi\!\left((-1)^{n/2}\right)q^{(n-2)/2},&n\text{ even},\\[1mm]
 0,&n\text{ odd}.
 \end{cases}
\]
Both exponents in the first line are integers because that line is used
only when $n$ is even.  The following theorem gives the exact
rank-refined recurrence for the number of upper triangular Cholesky
roots of the zero matrix over any finite field of odd characteristic.
\begin{theorem}
\label{thm:odd-recurrence}
Let $q$ be odd. Then
\[
 c_{n,0}^{(q)}=1,
 \qquad
 c_{n,r}^{(q)}=0\quad\text{if }r>\left\lfloor\frac n2\right\rfloor.
\]
For $1\le r\le\lfloor n/2\rfloor$,
\[
\boxed{
\begin{aligned}
c_{n,r}^{(q)}
={}&q^r c_{n-1,r}^{(q)}+\left[
q^{n-r}-q^{r-1}+\kappa_n(q)
\right]c_{n-1,r-1}^{(q)}.
\end{aligned}}
\]
The total number is
\[
|C_{n,q}|=\sum_{r\ge0}c_{n,r}^{(q)}.
\]
\end{theorem}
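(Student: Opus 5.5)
The plan follows the proof of \Cref{thm:even-recurrence} step for step. The only change is at the point where the last diagonal entry $c$ was uniquely determined; there we count pairs $(w,c)$ using \Cref{lem:Ems}. The base value $c^{(q)}_{n,0}=1$ and the vanishing for $r>\lfloor n/2\rfloor$ come from the same total-isotropy argument as before: $\Col U\subseteq(\Col U)^\perp$ for the nondegenerate dot product, which gives $2\rank U\le n$. That argument is characteristic-free.

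For the recurrence, write $U=\begin{pmatrix}U_0&w\\0&c\end{pmatrix}$ with $m=n-1$. The condition $U^TU=0$ is equivalent to $U_0\in C_{m,q}$, $w\in W^\perp$ (where $W=\Col U_0$), and $w^Tw+c^2=0$. So the extensions of a fixed $U_0$ of rank $s$ are exactly the pairs counted by $E_{m,s}$. Next, split these pairs by rank.
\begin{itemize}
\item If $w\in W$, total isotropy gives $w^Tw=0$. Then $c^2=0$, so $c=0$, and the rank is preserved. There are exactly $q^s$ such pairs.
\item If $w\notin W$, every valid $c$ gives rank $s+1$.
\end{itemize}
Hence a rank-$s$ root has $q^s$ rank-preserving extensions and $E_{m,s}-q^s$ rank-increasing ones.

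Now take $s=r$ for the first contribution, giving $q^r c^{(q)}_{n-1,r}$, and $s=r-1$ for the second, giving $\bigl(E_{n-1,r-1}-q^{r-1}\bigr)c^{(q)}_{n-1,r-1}$. It remains to identify $E_{n-1,r-1}$ with $q^{n-r}+\kappa_n(q)$ using \Cref{lem:Ems} with $m=n-1$.
\begin{itemize}
\item If $n$ is odd, then $m$ is even and $E_{m,r-1}=q^{m-r+1}=q^{n-r}$. This matches $\kappa_n=0$.
\item If $n$ is even, then $m$ is odd and
\[
E_{m,r-1}=q^{n-r}+(q-1)\chi\bigl((-1)^{n/2}\bigr)q^{(n-2)/2}=q^{n-r}+\kappa_n(q).
\]
\end{itemize}
The correction term does not depend on $s$, which is exactly why the coefficient in the statement depends only on $n$. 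The formula for $|C_{n,q}|$ is simply the sum over ranks.

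The main obstacle is already handled by \Cref{lem:Ems}: the number of extensions is independent of the particular totally isotropic $W$, which lets the count factor through $c^{(q)}_{n-1,r-1}$. What remains is a careful check that the rank split is exhaustive. In particular, one must confirm that no pair with $w\notin W$ can preserve rank, which holds because the last column then has a top block outside $W$. One must also check that the $s=r$ term involves only the $q^r$ preserving extensions, with no $\kappa$ correction, because those pairs are not counted through $E_{m,s}$.
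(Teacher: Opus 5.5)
Your proposal is correct and follows the paper's proof essentially step for step. It uses the same last-column block decomposition and the same split of the $E_{n-1,s}$ admissible pairs $(w,c)$ into $q^s$ rank-preserving ones (where $w\in W$ forces $c=0$) and $E_{n-1,s}-q^s$ rank-increasing ones, then evaluates $E_{n-1,r-1}$ via \Cref{lem:Ems} with $m=n-1$ according to the parity of $n$, exactly as the paper does. One small point of wording: $c_{n,0}^{(q)}=1$ holds because the zero matrix is the only rank-zero matrix, and the isotropy argument gives only the support bound.
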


\begin{proof}
The rank-zero assertion follows because the zero matrix is the only
matrix of rank zero and it satisfies $0^T0=0$.  For the support bound,
let $U^TU=0$ and put $W=\Col U$.  For any $Ux,Uy\in W$,
\[
 (Ux)^T(Uy)=x^TU^TUy=0,
\]
so $W\subseteq W^\perp$.  Nondegeneracy of the standard dot product
gives $\dim W^\perp=n-\dim W$; therefore
$2\rank U\le n$.  It remains to prove the recurrence for
$1\le r\le\lfloor n/2\rfloor$.

We analyze the possible one-step extensions of an upper triangular matrix satisfying $U^TU=0$. Write
\[
U = \begin{pmatrix} U_0 & w \\ 0 & c \end{pmatrix},
\]
where $U_0\in\T_{n-1}(\F_q)$, $w\in\F_q^{n-1}$, and $c\in\F_q$. Then
\[
U^TU
=
\begin{pmatrix}
U_0^T & 0\\
w^T & c
\end{pmatrix}
\begin{pmatrix}
U_0 & w\\
0 & c
\end{pmatrix}
=
\begin{pmatrix}
U_0^TU_0 & U_0^Tw\\
w^TU_0 & w^Tw+c^2
\end{pmatrix}.
\]
Thus $U^TU=0$ is equivalent to the three conditions
\[
U_0^TU_0=0,\qquad U_0^Tw=0,\qquad w^Tw+c^2=0.
\]
The first condition says $U_0\in C_{n-1,q}$. Let
\[
W=\Col U_0,\qquad s=\rank U_0.
\]
The second condition $U_0^Tw=0$ is equivalent to $w\in W^\perp$.
Because $U_0^TU_0=0$, the column space $W$ is totally isotropic,
that is, $W\subseteq W^\perp$. Therefore, for a fixed old root
$U_0$ of rank $s$, the valid pairs $(w,c)$ are exactly those in
the set
\[
\{(w,c)\in W^\perp\times\F_q : w^Tw+c^2=0\},
\]
whose cardinality is $E_{n-1,s}$ by definition, with $m=n-1$.
By Lemma~\ref{lem:Ems}, this number depends only on $n-1$, $s$,
and $q$.

We now determine the rank of the extended matrix $U$. The first
$n-1$ columns of $U$ are the columns of
\[
\begin{pmatrix} U_0 \\ 0 \end{pmatrix},
\]
whose column space has dimension $s$. If $w\in W$ and $c=0$,
then the last column
\[
\begin{pmatrix} w \\ 0 \end{pmatrix}
\]
lies in this column space, so the rank remains $s$. Since $W$ is
totally isotropic, every $w\in W$ satisfies $w^Tw=0$, and the
equation $w^Tw+c^2=0$ forces $c=0$. Hence for each $w\in W$
there is exactly one valid pair $(w,0)$, and the rank is preserved.
Conversely, if $w\notin W$ or $c\neq 0$, the last column is not in
the span of the previous columns, so the rank increases to $s+1$.

Thus, among the $E_{n-1,s}$ valid extensions of a fixed old root
$U_0$ of rank $s$, exactly $|W|=q^s$ preserve the rank, and the
remaining $E_{n-1,s}-q^s$ increase the rank by one.

Now we count matrices of rank $r$ in $C_{n,q}$. Such a matrix
arises either from an old root of rank $r$ with a rank-preserving
extension, or from an old root of rank $r-1$ with a rank-increasing
extension. Therefore
\[
c_{n,r}^{(q)}
=
q^r c_{n-1,r}^{(q)}
+
\bigl( E_{n-1,r-1} - q^{r-1} \bigr) c_{n-1,r-1}^{(q)}.
\]
Here $c_{n-1,r-1}^{(q)}$ is the number of old roots of rank $r-1$.

It remains to evaluate $E_{n-1,r-1}$ using Lemma~\ref{lem:Ems} with
$m=n-1$ and $s=r-1$.

If $n$ is odd, then $m=n-1$ is even, so the lemma gives
\[
E_{n-1,r-1}
=
q^{(n-1)-(r-1)}
=
q^{n-r}.
\]
Hence
\[
E_{n-1,r-1}-q^{r-1}
=
q^{n-r}-q^{r-1},
\]
and the recurrence becomes
\[
c_{n,r}^{(q)}
=
q^r c_{n-1,r}^{(q)}
+
\bigl( q^{n-r}-q^{r-1} \bigr) c_{n-1,r-1}^{(q)}.
\]

If $n$ is even, then $m=n-1$ is odd, so the lemma gives
\[
E_{n-1,r-1}
=
q^{n-r}
+
(q-1)\chi\bigl((-1)^{n/2}\bigr) q^{(n-2)/2}.
\]
Therefore
\[
E_{n-1,r-1}-q^{r-1}
=
q^{n-r}-q^{r-1}
+
(q-1)\chi\bigl((-1)^{n/2}\bigr) q^{(n-2)/2}.
\]

Combining both cases and using the definition of $\kappa_n(q)$ yields
the stated formula:
\[
c_{n,r}^{(q)}
=
q^r c_{n-1,r}^{(q)}
+
\left[
q^{n-r}-q^{r-1}+\kappa_n(q)
\right]
c_{n-1,r-1}^{(q)}.
\]

Finally, the total number of Cholesky roots of the zero matrix is obtained by
summing over all possible ranks:
\[
|C_{n,q}|=\sum_{r\ge 0}c_{n,r}^{(q)}.
\]
This completes the proof.
\end{proof}
\begin{corollary}
\label{cor:n2-odd}
If $q$ is odd, then
\[
|C_{2,q}|
=1+(q-1)(1+\chi(-1))
=
\begin{cases}
2q-1,&\chi(-1)=1,\\
1,&\chi(-1)=-1.
\end{cases}
\]
On the other hand,
\[
|B_{2,q}|=q.
\]
Thus in odd characteristic we generally no longer have
$|B_{n,q}|=|C_{n,q}|$.
\end{corollary}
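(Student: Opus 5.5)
The plan is to read off $|C_{2,q}|$ from \Cref{thm:odd-recurrence} at $n=2$, and to compute $|B_{2,q}|$ either directly or from \Cref{prop:Bq}.

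\textbf{Computing $|C_{2,q}|$.} For $n=2$ the admissible ranks are $r=0,1$, and $c_{2,0}^{(q)}=1$. For $r=1$, the recurrence gives
\[
c_{2,1}^{(q)}=q\,c_{1,1}^{(q)}+\bigl[q^{1}-q^{0}+\kappa_2(q)\bigr]c_{1,0}^{(q)}.
\]
Here $c_{1,1}^{(q)}=0$ by the support bound $r\le\lfloor 1/2\rfloor=0$, and $c_{1,0}^{(q)}=1$. The correction is
\[
\kappa_2(q)=(q-1)\chi(-1)q^{0}=(q-1)\chi(-1).
\]
Hence $c_{2,1}^{(q)}=(q-1)(1+\chi(-1))$, and summing over $r$ gives $|C_{2,q}|=1+(q-1)(1+\chi(-1))$. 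Evaluating at $\chi(-1)=\pm1$ gives the two displayed values $2q-1$ and $1$.

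\textbf{Direct check.} With $U=\begin{psmallmatrix}u&x\\0&c\end{psmallmatrix}$, the condition $U^TU=0$ forces $u=0$ and $x^2+c^2=0$. The solution $(0,0)$ gives the zero matrix. Nonzero solutions exist exactly when $-1$ is a square, and then they number $2(q-1)$. This agrees with the formula above.

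\textbf{Computing $|B_{2,q}|$.} \Cref{prop:Bq} gives $b_{2,0}^{(q)}=1$ and
\[
b_{2,1}^{(q)}=q\cdot 0+(q-1)\cdot 1=q-1,
\]
so $|B_{2,q}|=q$. Directly: $X=\begin{psmallmatrix}a&v\\0&d\end{psmallmatrix}$ with $X^2=0$ forces $a=d=0$ and leaves $v$ free, giving $q$ matrices.

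\textbf{Comparison.} We have $2q-1\ne q$ and $1\ne q$ for every $q\ge3$. So for every odd $q$ the totals differ already at $n=2$, which justifies the final sentence of the statement.

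There is no real obstacle here. The only care needed is to apply the support convention $c_{1,1}^{(q)}=0$ correctly and to evaluate $\kappa_2(q)$ with exponent $(n-2)/2=0$.
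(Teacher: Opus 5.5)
Your proof is correct, but your main route differs from the paper's. The paper does not invoke \Cref{thm:odd-recurrence} or \Cref{prop:Bq} at all. It writes out $U^TU$ and $X^2$ for a general $2\times 2$ upper triangular matrix and reduces $U^TU=0$ to $a=0$, $b^2+c^2=0$. It then counts the solutions with $b\ne0$ via $t=cb^{-1}$, $t^2=-1$, an equation with $1+\chi(-1)$ solutions. Your two ``direct check'' paragraphs are exactly this argument in compressed form.

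Your specialization of the recurrences is arithmetically right: $c_{1,1}^{(q)}=0$ from the support bound, $c_{1,0}^{(q)}=1$, $\kappa_2(q)=(q-1)\chi(-1)$, and $b_{2,1}^{(q)}=q-1$. It is also shorter. As a proof, however, it inherits everything behind \Cref{thm:odd-recurrence}, namely the Gauss-sum count of \Cref{lem:quadratic-zeros} and the hyperbolic splitting in \Cref{lem:Ems}.

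The paper's elementary computation needs only the number of square roots of $-1$. It therefore serves as an independent confirmation of the sign of $\kappa_2(q)$, rather than a consequence of it. By including both, your proposal gives the derivation from the general theorem and that independent consistency check at the same time.
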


\begin{proof}
Let $U\in\T_2(\F_q)$ be arbitrary, and write
\[
U=
\begin{pmatrix}
a & b\\
0 & c
\end{pmatrix},
\qquad a,b,c\in\F_q.
\]
Then
\[
U^TU
=
\begin{pmatrix}
a & 0\\
b & c
\end{pmatrix}
\begin{pmatrix}
a & b\\
0 & c
\end{pmatrix}
=
\begin{pmatrix}
a^2 & ab\\
ab & b^2+c^2
\end{pmatrix}.
\]
Thus $U^TU=0$ is equivalent to
\[
a^2=0,\qquad ab=0,\qquad b^2+c^2=0.
\]
Since $q$ is odd, $\F_q$ is a field of characteristic not $2$, and
the only solution of $a^2=0$ is $a=0$. The condition $ab=0$ is
then automatic. Hence $U\in C_{2,q}$ if and only if
\[
a=0
\qquad\text{and}\qquad
b^2+c^2=0.
\]
So every element of $C_{2,q}$ has the form
\[
U=
\begin{pmatrix}
0 & b\\
0 & c
\end{pmatrix},
\qquad b^2+c^2=0.
\]

We now count the pairs $(b,c)\in\F_q^2$ satisfying
\[
b^2+c^2=0.
\]
If $b=0$, then $c^2=0$, so $c=0$. This gives exactly one solution,
namely $(0,0)$.

If $b\neq 0$, then $b$ is invertible. Put $t=cb^{-1}$. Then
\[
b^2+c^2=b^2\bigl(1+t^2\bigr)=0,
\]
and since $b^2\neq0$, this is equivalent to
\[
t^2=-1.
\]
For each fixed $t\in\F_q$ satisfying $t^2=-1$, the corresponding
pairs are
\[
(b,c)=(b,tb),
\qquad b\in\F_q^\times,
\]
of which there are $q-1$. Therefore the number of solutions with $b\neq 0$ is
\[
(q-1)\cdot \#\{t\in\F_q:t^2=-1\}.
\]
Over a finite field of odd characteristic, the equation $t^2=-1$ has
two solutions if $-1$ is a square, and no solutions if $-1$ is a
nonsquare. In terms of the quadratic character $\chi$, this count is
\[
\#\{t\in\F_q:t^2=-1\}=1+\chi(-1),
\]
where $\chi(-1)=1$ when $-1$ is a square and $\chi(-1)=-1$
otherwise. Hence the total number of solutions is
\[
1+(q-1)\bigl(1+\chi(-1)\bigr).
\]
If $\chi(-1)=1$, this gives
\[
1+2(q-1)=2q-1.
\]
If $\chi(-1)=-1$, it gives
\[
1+0\cdot(q-1)=1.
\]
Therefore
\[
|C_{2,q}|
=
1+(q-1)\bigl(1+\chi(-1)\bigr)
=
\begin{cases}
2q-1,&\chi(-1)=1,\\
1,&\chi(-1)=-1.
\end{cases}
\]

Let $X\in\T_2(\F_q)$ be arbitrary, and write
\[
X=
\begin{pmatrix}
a & b\\
0 & c
\end{pmatrix}.
\]
Then
\[
X^2=
\begin{pmatrix}
a^2 & b(a+c)\\
0 & c^2
\end{pmatrix}.
\]
The condition $X^2=0$ is equivalent to
\[
a^2=0,\qquad c^2=0,\qquad b(a+c)=0.
\]
Since $q$ is odd, $a^2=0$ implies $a=0$, and $c^2=0$ implies
$c=0$. The remaining condition $b(a+c)=0$ is then automatic. Thus
$X\in B_{2,q}$ if and only if
\[
a=0,\qquad c=0,
\]
while $b\in\F_q$ is arbitrary. Consequently every element of
$B_{2,q}$ has the form
\[
X=
\begin{pmatrix}
0 & b\\
0 & 0
\end{pmatrix},
\qquad b\in\F_q,
\]
and there are exactly $q$ such matrices. Hence
\[
|B_{2,q}|=q.
\]

For every odd $q$, we have $q\ge 3$. If $\chi(-1)=-1$, then
\[
|C_{2,q}|=1\neq q=|B_{2,q}|.
\]
If $\chi(-1)=1$, then
\[
|C_{2,q}|=2q-1\neq q=|B_{2,q}|,
\]
because $2q-1=q$ would force $q=1$, contradicting $q\ge3$.
Thus in odd characteristic we generally have
\[
|C_{2,q}|\neq |B_{2,q}|,
\]
and therefore the total equinumerosity $|B_{n,q}|=|C_{n,q}|$ fails
already for $n=2$.
\end{proof}
\begin{example}
Over $\F_3$,
\[
|C_{1,3}|=1,\qquad
|C_{2,3}|=1,\qquad
|C_{3,3}|=9,\qquad
|C_{4,3}|=153.
\]
More precisely,
\[
c_{4,0}^{(3)}=1,\qquad
c_{4,1}^{(3)}=56,\qquad
c_{4,2}^{(3)}=96.
\]
\end{example}

\begin{example}
Over $\F_5$,
\[
|C_{1,5}|=1,\qquad
|C_{2,5}|=9,\qquad
|C_{3,5}|=65,
\]
where
\[
c_{3,0}^{(5)}=1,\qquad c_{3,1}^{(5)}=64.
\]
\end{example}

\begin{remark}
When $q$ is even, the recurrence matches the square-zero upper
triangular matrix count of Ekhad--Zeilberger
\cite{EkhadZeilberger1996}. When $q$ is odd, the
deviation appears only in even-order extension steps, and its sign is
governed by
\[
\chi((-1)^{n/2}).
\]
This shows that the characteristic-two phenomenon observed in the
original paper is not accidental; it reflects the structural difference
between the bijectivity of the square map in even characteristic and the
zero-count deviation of quadratic forms in odd characteristic.
\end{remark}

\section{Computational Verification}
\label{sec:verification}

\subsection{Verification Scope}

The formulas in this paper have been independently checked by
exhaustive enumeration as follows.
\begin{enumerate}[label=\textup{(\arabic*)}]
\item For every symmetric target, \Cref{thm:general-recursion} agrees
      with direct enumeration over $\F_2$ for $n\le4$, over $\F_3$
      for $n\le3$, and over $\F_5$ for $n\le2$.
\item Over $\F_2$, for every binary diagonal target with $n\le5$,
      both the total counts and the rank distributions from
      \Cref{thm:diagonal-recurrence} agree with exhaustive enumeration.
\item The zero-fiber rank recurrences agree with direct enumeration
      over $\F_3$ for $n\le4$ and over $\F_5$ and $\F_7$ for
      $n\le3$.
\end{enumerate}

In particular, the first five total numbers of Cholesky roots of the
zero matrix over $\F_2$ are
\[
1,\ 2,\ 6,\ 28,\ 192
\qquad(n=1,2,3,4,5),
\]
in agreement with \cite{CooperWhitlatch2025}.  The implementations used
for these checks are collected in \Cref{app:algorithms}.  The
arbitrary-target routine is written for prime fields; over a nonprime
field its integer modular arithmetic must be replaced by arithmetic in a
chosen finite-field representation.  The binary diagonal routine needs
no such representation, and the zero-fiber routine depends only on the
field order $q$.

\section{Conclusion}

The principal contribution of this paper is a fiber-level theory for the
singular part of the triangular Cholesky map.  The explicit construction
in \Cref{thm:explicit-bijection} upgrades the previously known binary
rank-refined equinumerosity to an invertible rank-preserving bijection.
The recurrences in \Cref{thm:even-recurrence,thm:odd-recurrence} then
determine the entire rank distribution of the zero fiber over every
finite field: even characteristic preserves the square-zero recurrence,
while odd characteristic contributes a correction governed by the
discriminant of a quotient quadratic space.

The general recursion of \Cref{thm:general-recursion} places these
zero-target results within an exact theory for every prescribed symmetric
target.  Its specialization to the LPM locus recovers the constant regular
fiber sizes compatible with the generalized Cholesky parametrizations of
\cite{Vishwakarma2027}; zero pivots are precisely where rank-one branching
begins.  For binary diagonal targets, \Cref{thm:diagonal-recurrence}
compresses that branching into an $O(n^2)$ arithmetic state recursion.
Thus the target-class enumerations of Vishwakarma and of Ayyer--Prasad
\cite{AyyerPrasad2026} and the fixed-target multiplicities developed here
address complementary levels of the finite-field Cholesky problem.
Together, the results give complete recursive answers to the three
questions of \cite{CooperWhitlatch2025}.

Building on these results, several natural problems remain open for future investigation.

\begin{enumerate}[label=\textup{(\roman*)}]
\item Solve the odd-characteristic recurrence to obtain a closed summation formula and asymptotic estimates analogous to the Ekhad--Zeilberger formula.
\item Classify the orbits of the invertible upper triangular congruence action
      \[
      A\longmapsto P^TAP
      \]
      on $\Sym_n(\F_q)$, and give $\nu_q(A)$ directly for each normal
      form, including the singular boundary orbits.
\item Determine whether the branching states in the general matrix
      recursion can be compressed by finitely many flag invariants, so
      that the worst-case exponential algorithm can be improved to a
      parameterized polynomial algorithm.
\item Over $\F_{2^e}$, find a version of the rank-preserving bijection
      that is Frobenius equivariant and avoids the RREF--lexicographic
      choice.  This would strengthen the present flag-compatible
      construction without changing its rank behavior.
\item For a nondegenerate symmetric form $G$, study the twisted zero
      fibers $\{U\in\T_n(\F_q):U^TGU=0\}$ and determine how their rank
      recurrences depend on the Witt type of $G$.
\item Study the Hermitian analogue
      \[
      U^\ast U=0
      \]
      in odd characteristic, and its relation to unitary quadratic spaces and the enumeration of Hermitian totally isotropic flags.
\end{enumerate}

The regular--singular distinction developed here provides a basis for
these extensions of finite-field Cholesky fiber theory.

\appendix

\section{Algorithms and Reproducible Recurrences}
\label{app:algorithms}

This appendix collects all program listings used in the paper.  The
first routine uses integer representatives and is therefore written for
prime fields.  The second routine is specifically binary.  The third
depends only on the field order and works for arbitrary prime powers.
All matrices passed to the first routine are immutable tuples of tuples,
which permits exact memoization.

\subsection{The arbitrary-target recursion over a prime field}
\label{app:general-code}

The next routine is a direct implementation of
\Cref{thm:general-recursion}.  Its return value is an exact integer; no
random choices are made.

\begin{lstlisting}[style=pythonstyle,basicstyle=\ttfamily\footnotesize,
caption={Cholesky fiber cardinality over a prime field}]
from functools import lru_cache
from itertools import product

def subtract_outer(D, x, q, scale=1):
    """Return D - scale*x*x^T over the prime field F_q."""
    m = len(D)
    return tuple(
        tuple((D[i][j] - scale * x[i] * x[j]) % q
              for j in range(m))
        for i in range(m)
    )

@lru_cache(None)
def cholesky_count_prime(A, q):
    """Number of upper triangular U over F_q with U^T U = A."""
    n = len(A)
    if n == 0:
        return 1

    if any(A[i][j] % q != A[j][i] % q
           for i in range(n) for j in range(n)):
        return 0

    a = A[0][0] % q
    b = tuple(A[0][j] % q for j in range(1, n))
    D = tuple(
        tuple(A[i][j] % q for j in range(1, n))
        for i in range(1, n)
    )

    if a != 0:
        roots = [u for u in range(q) if (u * u) % q == a]
        if not roots:
            return 0
        inverse_a = pow(a, -1, q)
        target = subtract_outer(D, b, q, scale=inverse_a)
        return len(roots) * cholesky_count_prime(target, q)

    if any(b):
        return 0

    total = 0
    for x in product(range(q), repeat=n - 1):
        target = subtract_outer(D, x, q)
        total += cholesky_count_prime(target, q)
    return total
\end{lstlisting}

For $q=2$, subtraction agrees with addition and the routine specializes
to \Cref{cor:binary-recursion}.  In the worst case it still enumerates
$q^{n-1}$ rank-one perturbations at a zero leading row; the cache avoids
recomputing identical smaller targets.

\subsection{Rank distribution for a binary diagonal target}
\label{app:diagonal-code}

The dictionary key in the internal state is the excess rank $t$, not
the absolute rank.  The assertion on the coefficient is a runtime check
of the admissible-state bound proved in
\Cref{thm:diagonal-recurrence}.

\begin{lstlisting}[style=pythonstyle,basicstyle=\ttfamily\footnotesize,
caption={Rank distribution for binary diagonal targets}]
def diagonal_rank_distribution(bits):
    """
    bits = (d_1,...,d_n), with each d_i in {0,1}.
    Return {root_rank: number_of_roots}.
    """
    h = {0: 1}  # key: excess rank t
    s = 0       # number of ones already read
    z = 0       # number of zeros already read

    for d in bits:
        new = {}
        if d == 1:
            s += 1
            for t, value in h.items():
                coefficient = 2 ** (z - t)
                new[t] = new.get(t, 0) + coefficient * value
        else:
            z += 1
            for t, value in h.items():
                # Preserve the excess rank t.
                new[t] = new.get(t, 0) + (2 ** t) * value

                # Increase the excess rank from t to t+1.
                coefficient = 2 ** (z - t - 1) - 2 ** t
                assert coefficient >= 0
                if coefficient:
                    new[t + 1] = (
                        new.get(t + 1, 0) + coefficient * value
                    )

        h = {t: value for t, value in new.items() if value}

    return {s + t: value for t, value in h.items()}
\end{lstlisting}

\subsection{Rank distributions in the zero fiber}
\label{app:zero-code}

The correction term in odd characteristic requires only the quadratic
character of $-1$, which is determined by $q\bmod4$ for every odd
prime power.  Thus the routine below takes the field order $q$ as an
integer and implements both \Cref{thm:even-recurrence} and
\Cref{thm:odd-recurrence}.

\begin{lstlisting}[style=pythonstyle,basicstyle=\ttfamily\footnotesize,
caption={Rank recurrences for Cholesky roots of the zero matrix}]
def zero_cholesky_rank_counts(n_max, q):
    """
    Return rank distributions for orders 0,...,n_max.
    The integer q must be a prime power.
    """
    if q < 2:
        raise ValueError("q must be a prime power")

    even_characteristic = (q % 2 == 0)
    chi_minus_one = None
    if not even_characteristic:
        chi_minus_one = 1 if q % 4 == 1 else -1

    distributions = [{0: 1}]
    previous = {0: 1}

    for n in range(1, n_max + 1):
        current = {}
        for old_rank, value in previous.items():
            # Preserve the old rank.
            current[old_rank] = (
                current.get(old_rank, 0)
                + (q ** old_rank) * value
            )

            # Increase the old rank by one.
            new_rank = old_rank + 1
            coefficient = q ** (n - new_rank) - q ** old_rank

            if (not even_characteristic) and n % 2 == 0:
                exponent = n // 2
                chi_power = 1 if exponent % 2 == 0 else chi_minus_one
                coefficient += (
                    (q - 1) * chi_power * q ** ((n - 2) // 2)
                )

            assert coefficient >= 0
            if coefficient:
                current[new_rank] = (
                    current.get(new_rank, 0)
                    + coefficient * value
                )

        current = {r: value for r, value in current.items() if value}
        distributions.append(current)
        previous = current

    return distributions
\end{lstlisting}


\section*{Data availability}
Data sharing not applicable to this article as no data sets were generated or analysed during the current study.

\section*{Declaration of competing interest}
The authors declare that we have no known competing financial interests or personal relationships that 
could have appeared to influence the work reported in this paper.

\end{document}